\documentclass[11pt,twoside]{amsart}

\usepackage{amsmath}
\usepackage{amsthm}
\usepackage{amsfonts, amssymb}
\usepackage{mathrsfs}
\usepackage[all,line]{xy}
\usepackage{url}
\usepackage{graphics}
\usepackage{epstopdf}
\usepackage{comment}
\usepackage{xcolor}

\definecolor{thmcol}{RGB}{65, 102, 245}
\definecolor{citecol}{RGB}{65, 102, 245}
\definecolor{linkcol}{RGB}{65, 102, 245}
\definecolor{urlcol}{RGB}{65, 102, 245}

\usepackage[colorlinks=true]{hyperref}
\hypersetup{
    colorlinks,
    citecolor=citecol,
    linkcolor=linkcol,
    urlcolor=urlcol
}

\usepackage{latexsym}
\usepackage{graphicx}

\usepackage{tikz}
\usepackage{tkz-graph}
\usetikzlibrary{decorations.text,arrows,quotes}

\theoremstyle{plain}
\newtheorem{lemma}{Lemma}[section]
\newtheorem{proposition}[lemma]{Proposition}
\newtheorem{thm}[lemma]{Theorem}
\newtheorem*{thm*}{Theorem}
\newtheorem*{proposition*}{Proposition}
\newtheorem{corollary}[lemma]{Corollary}
\theoremstyle{remark}

\newtheorem{remark}[lemma]{Remark}

\theoremstyle{definition}
\newtheorem{definition}[lemma]{Definition}
\newtheorem{ej}[lemma]{Example}

\newtheorem*{thm1}{Theorem \ref{ch}}
\newtheorem*{thm2}{Theorem \ref{teocov}}
\newtheorem*{thm3}{Theorem \ref{asphericalcw}}
\newtheorem*{thm4}{Theorem \ref{acvstcg}}
\newtheorem*{thm5}{Theorem \ref{isobundles}}
\newtheorem*{prop1}{Proposition \ref{smooth}}

\def\Z{\mathbb{Z}}
\def\R{\mathbb{R}}

\def\tx{\widetilde{X}}
\def\ty{\widetilde{Y}}
\def\ta{\widetilde{A}}
\def\xtoi{X^{\mathrm I}}
\def\xytoi{(X\times Y)^{\mathrm I}}
\def\mtoi{M^{\mathrm I}}
\def\ytoi{Y^{\mathrm I}}
\def\atoi{A^{\mathrm I}}
\def\txtoi{\tx^{\mathrm I}}

\def\tatoi{\ta^{\mathrm I}}
\def\pix{\Pi(X)}

\begin{document}

\title[Topological Cartan-Hadamard Theorem and AC]{A topological version of the Cartan-Hadamard Theorem and asphericity complexity}
\author[E.G. Minian and J.M. Perez Garber]{Elías Gabriel Minian and Juan Martín Perez Garber}

\address{Departamento  de Matem\'atica-IMAS (CONICET)\\
 FCEyN, Universidad de Buenos Aires\\ Buenos
Aires, Argentina.}

\email{gminian@dm.uba.ar}
\email{jperezgarber@dm.uba.ar}
\thanks{The first author is researcher of CONICET and is partially supported by grant UBACYT 20020250100097BA}

\begin{abstract}
We investigate a topological version of the Cartan-Hadamard theorem that allows one to study asphericity of spaces via distinguished families of paths. A topological space has the distinguished path property (dpp) if there exists a continuous map from the space $\Pi(X)$ of homotopy classes of paths (relative endpoints) to the path space $X^I$ that is a right inverse to the canonical quotient map. For complete metric spaces endowed with a locally convex metric, the distinguished paths are precisely the local geodesics. We show that if $X$ has the dpp, then its universal cover is contractible and, in particular, $X$ is aspherical.  The space $\Pi(X)$ is a fiber bundle over $X$ whose fiber is the universal cover of $X$, and in the case of Riemannian manifolds of non-positive curvature it is naturally isomorphic to the tangent bundle. We prove that every aspherical  CW-complex that is locally finite or countable has the dpp, and this allows us to reinterpret asphericity of CW-complexes in terms of the existence of continuous sections. We also define and study the notion of asphericity complexity of spaces by means of local sections from $\Pi(X)$ to $\xtoi$ and relate it to the concept of equivariant topological complexity introduced by Colman and Grant.
\end{abstract}

\subjclass[2020]{55R10, 55M30, 53C23, 57M60, 57M10.} 

\keywords{Asphericity, topological complexity, non-positive curvature.}

\maketitle

\section{Introduction}

The Cartan-Hadamard theorem for locally convex metric spaces asserts that, if $X$ is a complete connected and locally convex metric space, then the universal cover $\widetilde X$ with the induced length metric is (globally) convex \cite{BH, BBI}. If we require extra hypotheses on $X$, we obtain stronger results. For instance, if $X$ is a complete Riemannian manifold of non-positive curvature, then $\widetilde X$ is diffeomorphic to $\R^n$. Similarly, if $X$ is locally CAT(0), then $\widetilde X$ is (globally) CAT(0) (see \cite{BH,gromov}). From the topological point of view, in any of these cases, $\widetilde X$ is contractible and, in particular, $X$ is aspherical. In fact, from the local convexity of the metric one can deduce the following two facts which, in turn, imply the contractibility of the universal cover: there exists a unique local geodesic in any homotopy class of paths in $X$ (relative endpoints), and locally, the geodesics depend continuously on their endpoints. This motivates the following construction. For a topological space $X$ that admits a universal cover, we consider the set $\pix$ of homotopy classes of paths in $X$ (relative endpoints) endowed with a convenient topology (see Section \ref{pix} below). The space $\pix$ is not new, it is the space of arrows of the topological fundamental groupoid of $X$ and appears, for example, in the work of Brown and Danesh-Naruie \cite{BD}. The map $s:\pix\to X$ defined by $s([\gamma])=\gamma(0)$ is a locally trivial fiber bundle over $X$ whose fiber is the universal cover $\widetilde X$. We use $\pix$ to state a topological version of the Cartan-Hadamard theorem. We say that $X$ has the distiguished path property (dpp) if 
there exists a continuous map $g:\pix\to\xtoi$ that is a section of the quotient map $q:\xtoi\to\pix$. The map $g$ chooses a {\it distinguised path} in every homotopy class. These paths play the rol of the local geodesics. The topological version of the Cartan-Hadamard theorem is proved in Section \ref{pix}.
\begin{thm1}
	If $X$ has the dpp, then $\tx$ is contractible and, in particular, $X$ is aspherical.
	\end{thm1}
The assignement $X\to \pix$ is functorial and has very nice properties. In Section \ref{basics} we prove that it preserves covering maps and deck transformation groups.
\begin{thm2}
If $p:Y\to X$ is a covering map, then $p_*:\Pi(Y)\to \pix$ is also a covering map. Moreover, the deck transformation groups $Deck(Y/X)$ and $Deck(\Pi(Y)/\pix)$ are isomorphic. Also, if $\tx$ is the universal cover of $X$, then $\Pi(\tx)$ is the universal cover of $\pix$.
\end{thm2}
We also show that it preserves CW structures and smooth structures. More specifically, for smooth manifolds we prove the following.
\begin{prop1}
	If $M$ is a smooth manifold of dimension $n$, then $\Pi(M)$ is a smooth manifold of dimension $2n$. Moreover $s:\Pi(M)\to M$ is a smooth bundle.
\end{prop1}
For complete Riemannian manifolds of non-positive curvature, we prove that it is isomorphic (as a fiber bundle) to the tangent bundle.

\begin{thm5}
	If $M$ is a complete Riemannian manifold of nonpositive curvature, then $TM$ and $\Pi(M)$ are isomorphic as fiber bundles over $M$.
\end{thm5}

In Section \ref{dppforcovers} we show that every aspherical CW-complex that is locally finite or countable has the dpp. Recall that a CW-complex $X$ is locally finite if every point is contained in a finite number of cells. It is said to be countable if it has countably many cells.
\begin{thm3}
	Let $X$ be a locally finite or countable aspherical CW-complex, then $X$ has the dpp.
\end{thm3}
This allows us to describe asphericity of CW-complexes in terms of the existence of a global continuous section.

In Section \ref{convexity} we study a topological notion of convexity for subspaces of spaces having the dpp, and in Section \ref{ac} we introduce and investigate the notion of asphericity complexity. The (normalized) asphericity complexity $AC(X)$ measures how far $X$ is from being aspherical. For example, a CW-complex $X$ is aspherical if and only if $AC(X)=0$. This notion is closely related to the notion of equivariant topological complexity introduced by Colman and Grant \cite{CG}. More concretely, we prove the following.
\begin{thm4}
	Let $X$ be a (good) space. Then $AC(X)=TC_G(\widetilde X)$, where $p:\tx\to X$ is the universal cover and $G=Deck(\tx/X)$.
	\end{thm4}
The paper concludes with the computation of the asphericity complexity of spheres and projective spaces and a formula for the asphericity complexity of the product of spaces.

\section{The distinguished path property and the covering bundle $\Pi(X)$}\label{pix}

We will work with topological spaces that admit a universal cover. Recall that a path connected space $X$ admits a universal covering if and only if it is locally path connected and semilocally simply connected. These conditions on $X$ imply that it has a basis for the topology consisting of path connected open subsets $U\subset X$ such that the inclusion $i:U\to X$ induces the trivial map $i_*=0:\pi_1(U,x)\to\pi_1(X,x)$ for every $x\in U$. For simplicity, such open subsets will be called {\it nice}, and the spaces which admit universal cover will be called {\it good}. Note that CW-complexes and manifolds are good.

From now on all spaces that we deal with are assumed to be good (in particular they are all path connected). We denote by $\xtoi$ the (free) path space of $X$ equipped with the compact-open topology.

Let $\pix=\{[\gamma],\ \gamma:I\to X\}$ be the set of homotopy classes (relative endpoints) of continuous paths in $X$. We endow $\pix$ with the following topology, which is similar to that of the universal covering of $X$, viewed as the space of homotopy classes of paths starting at a given point $x_0\in X$ (cf. \cite{mun}). Concretely, for every pair of nice open subsets $U,V\subseteq X$ and every continuous path $\gamma:I\to X$ with $\gamma(0)\in U$ and $\gamma(1)\in V$, define  $S(U,V,[\gamma])=\{[\gamma'],\ \gamma'=\delta_1*\gamma*\delta_2\}$ where $\delta_1$ and $\delta_2$ are paths contained in $U$ and $V$ respectively, and $\delta_1*\gamma*\delta_2$ denotes the concatenation of the paths (see Figure \ref{fig:suv}).

\begin{figure}[h]
\centering
	\includegraphics[scale=.2]{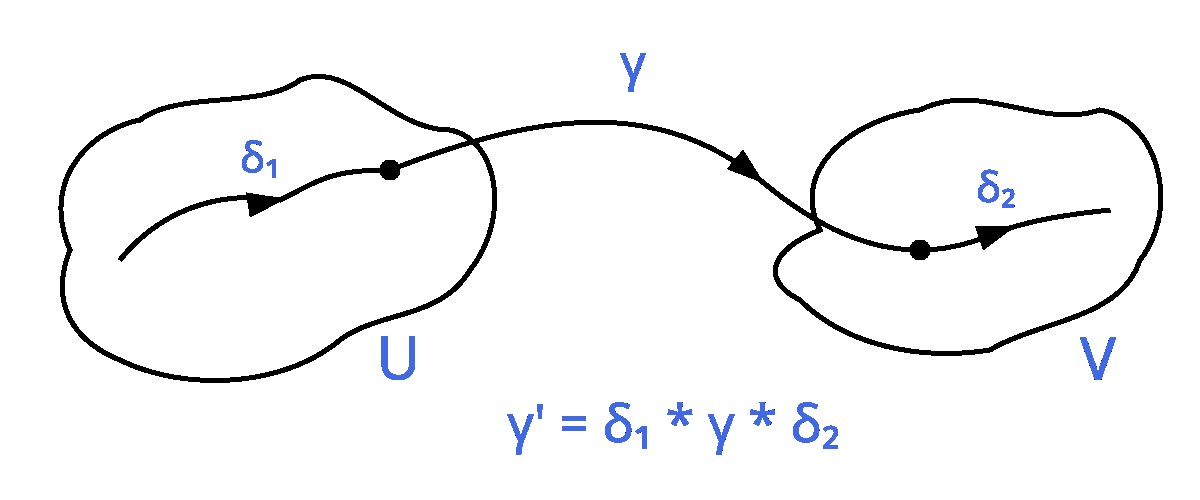}
\caption{}
\label{fig:suv}
\end{figure}

One can check that the sets $S(U,V,[\gamma])$ form a basis for a topology on $\Pi(X)$ and we equip $\pix$ with the topology generated by this basis. 

Note that the maps $$s,t:\pix\to X$$
defined by $s([\gamma])=\gamma(0)$ and $t([\gamma])=\gamma(1)$ are continuous.

\begin{remark}
	By construction, for every $x_0\in X$, the fibre $s^{-1}(x_0)=\widetilde{X}_{x_0}$ is the universal cover of $X$ (with base point $x_0$) and the restricion of the map $t$ to $s^{-1}(x_0)$ is the covering map $\widetilde{X}_{x_0}\to X$.
	\end{remark}
	
The space $\pix$ constructed above is just the space of arrows of the topological fundamental groupoid of $X$, studied by R. Brown and G. Danesh-Naruie in \cite{BD} (see also \cite{HH}). The following result was proved by Brown and Danesh-Naruie using a result of Ehresmann \cite{ehr}. We exhibit here a direct proof, since it will be used later in the paper.

\begin{proposition}
Let $X$ be a (good) space. Then	$s:\pix\to X$ is a fiber bundle over $X$ with fiber the universal cover $\tx$.
	\end{proposition}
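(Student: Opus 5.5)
To prove the proposition I will construct explicit local trivializations of $s$ over nice open sets. Fix a base point $x_0\in X$ and identify $\tx$ with $\tx_{x_0}=s^{-1}(x_0)$, as in the Remark. Since $X$ is good and path connected, the nice open sets cover $X$. For a nice open set $U$, choose a point $u\in U$ and a path $\alpha$ from $x_0$ to $u$. For each $x\in U$ choose a path $\beta_x$ in $U$ from $u$ to $x$. Because $U$ is nice, any two paths in $U$ with the same endpoints are homotopic in $X$ rel endpoints. Hence the class $[\beta_x]$ in $X$ does not depend on the choice of $\beta_x$. Define
$$\phi_U:U\times \tx_{x_0}\to s^{-1}(U),\qquad \phi_U(x,[\gamma])=[\overline{\beta_x}*\overline{\alpha}*\gamma],$$
where the bar denotes the reverse path. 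Its inverse is
$$\psi_U([\eta])=\bigl(\eta(0),[\alpha*\beta_{\eta(0)}*\eta]\bigr).$$
Both maps are well defined on homotopy classes, they are mutually inverse, and they commute with the projections to $U$. So what remains is to check that $\phi_U$ and $\psi_U$ are continuous.

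For continuity I will work directly with the basic sets $S(U',V',[\eta])$. Take $[\eta]\in s^{-1}(U)$ and a basic neighbourhood $S(U',V',[\eta])$. Shrinking $U'$ to a nice set contained in $U\cap U'$ keeps it a basic neighbourhood of $[\eta]$. The key observation is the following. If $[\eta']=[\delta_1*\eta*\delta_2]$ with $\delta_1\subset U'$ and $\delta_2\subset V'$, then $\beta_{\eta'(0)}$ is homotopic to $\beta_{\eta(0)}*\overline{\delta_1}$, because both are paths in the nice set $U$ with the same endpoints. Consequently
$$[\alpha*\beta_{\eta'(0)}*\eta']=[\alpha*\beta_{\eta(0)}*\eta*\delta_2].$$
This class lies in the basic neighbourhood of $[\alpha*\beta_{\eta(0)}*\eta]$ in $\tx_{x_0}$ determined by $V'$; recall that this neighbourhood topology is the subspace topology from $\pix$ and is the standard topology on the universal cover. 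Continuity of the first component of $\psi_U$ is just continuity of $s$. For $\phi_U$, take a basic neighbourhood of $\phi_U(x,[\gamma])$ and intersect its source set with $U$, shrinking to a nice set $W\subset U$. Then the product of $W$ with the $V'$-neighbourhood of $[\gamma]$ in $\tx_{x_0}$ is mapped into it, by the same homotopy identity read in reverse.

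I expect the main obstacle to be this continuity verification, in particular making sure that the choice of the $\beta_x$ introduces no discontinuity. The nice condition on $U$ is exactly what guarantees that $[\beta_x]$ varies coherently with $x$. A minor point to settle alongside it is that intersections of nice sets need not be nice or path connected. I will handle this by always shrinking to a nice neighbourhood inside the intersection, using that the nice sets form a basis.
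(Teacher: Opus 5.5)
Your proposal is correct and is essentially the paper's proof. The paper trivializes over a nice $U$ by $\varphi([\gamma])=(\gamma(0),[\lambda_{px}*\gamma])$ with fiber $s^{-1}(p)$ for some $p\in U$, and your $\psi_U$ is this same map followed by the change of base point along $\alpha$ to the fixed fiber $s^{-1}(x_0)$. Your continuity argument, via the identity $[\beta_{\eta'(0)}]=[\beta_{\eta(0)}*\overline{\delta_1}]$ in the nice set $U$, supplies the verification the paper leaves as ``easy to see.''
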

	
\begin{proof}
	We show that for any nice open subset $U\subseteq X$ there exists a local trivialization map $\varphi:s^{-1}(U)\to U\times \tx$ such that the following diagram commutes.
	
		\begin{center}
		
		\begin{tikzpicture}[scale=0.5,
			node distance=4cm,
			every node/.style={font=\large}
			]

			\node (A) at (0,4)   {$s^{-1}(U)$};
			\node (B) at (6,4)   {$U \times \tx$};
			\node (C) at (3,0)   {$U$};

			\draw[->] (A) -- (B) node[midway, above] {$\varphi$};

			\draw[->] (A) -- (C) node[midway, left] {$s$};

			\draw[->] (B) -- (C) node[midway, right] {$pr_1$};

			\node at (3, 2.6) {$\equiv$};
			
		\end{tikzpicture} 
	\end{center}
	
Here $pr_1$ denotes the projection on the first coordinate. Given a nice open set $U\subset X$, fix a point $p\in U$. We identify $\tx=s^{-1}(p)$ and define $\varphi:s^{-1}(U)\to U\times \tx$ by $\varphi([\gamma])=(\gamma(0),[\lambda_{px}*\gamma])$, where $\lambda_{px}$ is any path in $U$ from $p$ to $x=\gamma(0)$.  Since $U$ is nice, $\varphi$ is well defined, and it is easy to see that it is a homeomorphism.
\end{proof}

The space $\pix$ will be called the {\it covering bundle} over $X$. By definition of the topology on $\pix$  one can deduce the following (see also \cite{HH}).

\begin{proposition}
	The map $q:\xtoi\to\pix$ defined by $q(\gamma)=[\gamma]$ is a quotient map.
	\end{proposition}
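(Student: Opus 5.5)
The plan is to prove three things about $q$: it is surjective, continuous, and open. A continuous open surjection is automatically a quotient map.

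\textbf{Surjectivity.} This is immediate, since every class $[\gamma]$ is the image of its representative $\gamma$.

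\textbf{Continuity.} Take a basic open set $S(U,V,[\gamma])$ and a path $\gamma'$ with $[\gamma']\in S(U,V,[\gamma])$. We need a compact-open neighbourhood $W$ of $\gamma'$ with $q(W)\subseteq S(U,V,[\gamma])$. Note first that $S(U,V,[\gamma'])=S(U,V,[\gamma])$, because the sets in the basis behave like the neighbourhoods in the universal cover: $U$ and $V$ are path connected and nice. So we may assume $\gamma'=\gamma$.
\begin{enumerate}
\item By the Lebesgue lemma, choose a subdivision $0=t_0<\dots<t_k=1$ and nice open sets $U=W_0,W_1,\dots,W_k=V$ with $\gamma([t_{i-1},t_i])\subseteq W_i$ for $1\le i\le k-1$. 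We also require $\gamma([0,t_1])\subseteq U$ and $\gamma([t_{k-1},1])\subseteq V$; this can be arranged by refining.
\item At each interior point $t_i$, pick a path connected nice open set $O_i\ni\gamma(t_i)$ contained in $W_i\cap W_{i+1}$, where we take $W_0=U$ and $W_{k}=V$ at the two ends.
\item Let $W$ be the set of paths $\beta$ with $\beta([t_{i-1},t_i])\subseteq W_i$ and $\beta(t_i)\in O_i$ for all $i$. This is open in the compact-open topology, since it is a finite intersection of subbasic sets, the conditions at points being of the form $\beta(\{t_i\})\subseteq O_i$.
\item For $\beta\in W$, join $\gamma(t_i)$ to $\beta(t_i)$ by a path $\sigma_i$ inside $O_i$. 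Each loop $\gamma|_{[t_{i-1},t_i]}*\sigma_i*\overline{\beta|_{[t_{i-1},t_i]}}*\overline{\sigma_{i-1}}$ lies in the nice set $W_i$, so it is null-homotopic in $X$. Telescoping these gives $[\beta]=[\delta_1*\gamma*\delta_2]$, where $\delta_1=\overline{\sigma_0}$ lies in $U$ and $\delta_2=\sigma_k$ lies in $V$. Hence $q(W)\subseteq S(U,V,[\gamma])$.
\end{enumerate}

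\textbf{Openness.} Let $\mathcal O$ be open in $\xtoi$ and $\gamma\in\mathcal O$. We must find nice sets $U,V$ with $S(U,V,[\gamma])\subseteq q(\mathcal O)$. It suffices to treat a basic neighbourhood $\bigcap_j\{\beta:\beta(K_j)\subseteq O_j\}$ of $\gamma$ contained in $\mathcal O$.
\begin{enumerate}
\item Choose nice sets $U\ni\gamma(0)$ and $V\ni\gamma(1)$ with $U\subseteq\bigcap_{j:0\in K_j}O_j$ and $V\subseteq\bigcap_{j:1\in K_j}O_j$.
\item Choose $\epsilon>0$ such that $\gamma([0,\epsilon])\subseteq U$, $\gamma([1-\epsilon,1])\subseteq V$, and every $K_j$ missing $0$ (respectively $1$) also misses $[0,\epsilon]$ (respectively $[1-\epsilon,1]$). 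This is possible because the $K_j$ are compact.
\item Given $\delta_1*\gamma*\delta_2$ with $\delta_1\subseteq U$ and $\delta_2\subseteq V$, form the reparametrized path $\beta$. On $[0,\epsilon]$ it traverses $\delta_1$ followed by $\gamma|_{[0,\epsilon]}$, all inside $U$. It agrees with $\gamma$ on $[\epsilon,1-\epsilon]$. On $[1-\epsilon,1]$ it traverses $\gamma|_{[1-\epsilon,1]}$ followed by $\delta_2$, all inside $V$.
\item Then $\beta$ is homotopic rel endpoints to $\delta_1*\gamma*\delta_2$, and by the choices in steps 1 and 2 it satisfies every condition $\beta(K_j)\subseteq O_j$. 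Therefore $[\delta_1*\gamma*\delta_2]\in q(\mathcal O)$.
\end{enumerate}

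The main obstacle is the bookkeeping with the compact sets $K_j$ in the openness argument, and in particular choosing $\epsilon$ so that the reparametrization touches only neighbourhoods of the endpoints. The continuity step is the standard argument for the topology of the universal cover, applied at both ends.
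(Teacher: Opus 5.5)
Your proof is correct. Continuity follows from the standard chain-of-nice-sets argument used for the universal cover. Openness follows by reparametrizing $\delta_1*\gamma*\delta_2$ so that $\delta_1$ and $\delta_2$ occupy short end-intervals that the compact sets $K_j$ not containing the corresponding endpoint avoid. Together these show that $q$ is an open continuous surjection, hence a quotient map.

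The paper gives no separate argument; it only says the statement follows from the definition of the topology on $\Pi(X)$ and points to the literature. Your write-up supplies that verification. The indexing at the two ends needs tidying: take $O_0\subseteq U\cap W_1$ and $O_k\subseteq W_k=V$.
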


\begin{ej} The covering bundle of the $1$-dimensional sphere $S^1$ is trivial, i.e. $\Pi(S^1)=S^1\times\R$. One can easily check that the map $\varphi: \Pi(S^1)\to S^1\times\R$ defined by $$\varphi([\gamma])=(\gamma(0),(\widetilde{\gamma(0)^{-1}\gamma})(1)),$$ where $\widetilde{\gamma(0)^{-1}\gamma}$ is the lifting of the path $\gamma(0)^{-1}\gamma$ starting at $0\in\R$, is a homeomorphism, and it induces an isomorphism of bundles over $S^1$.
	\end{ej}
	
	The previous example can be generalized to any topological group.
	
	\begin{remark}
		If $G$ is a topological group, $\Pi(G)=G\times \widetilde G$ via $$\varphi([\gamma])=(\gamma(0), [\gamma(0)^{-1}\gamma]).$$ Here $\widetilde G$ is the universal cover of $G$ viewed as the space of homotopy classes of paths starting at the identity $1\in G$. Note that $\Pi(G)=G\times \widetilde G$ is also a topological group (we give $\widetilde G$ a group structure via$[\gamma][\beta]=[\gamma.\beta]$).
		\end{remark}

	We will show below that, for any complete Riemannian manifold of non-positive curvature $M$, the covering bundle $\Pi(M)$ is isomorphic (as a fiber bundle over $M$) to the tangent bundle $TM$. This will produce an infinite number of examples of spaces with non-trivial $\Pi(M)$.

\begin{definition}
	A space $X$ has the distinguished path property (dpp) if there exists a continuous map $g:\Pi(X)\to \xtoi$ which is right inverse to the quotient map $q$, i.e. if $[g([\gamma])]=[\gamma]$. In that case, the map $g:\Pi(X)\to\xtoi$ is a called a choice map.
	\end{definition}

\begin{ej}
	Connected complete metric spaces with locally convex metrics have the dpp. This follows from the two facts mentioned in the introduction: there exists a unique local geodesic in any homotopy class of paths in $X$, and locally, the geodesics depend continuously on their endpoints (see \cite[II.4.7]{BH} and \cite{BBI}).
	\end{ej}
	
The proof of the topological version of the Cartan-Hadamard theorem	follows the same ideas as the metric version (cf. \cite[II.4.5]{BH}): if $X$ has the dpp, one can use a choice map $g:\pix\to\xtoi$ to contract every homotopy class of paths to the class of a constant map.

\begin{thm}\label{ch}
	If $X$ has the dpp, then $\tx$ is contractible and, in particular, $X$ is aspherical.
	\end{thm}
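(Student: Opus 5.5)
We use the choice map $g:\pix\to\xtoi$ to contract the fiber $s^{-1}(x_0)$, which by the Remark after the definition of the topology on $\pix$ is the universal cover $\tx=\tx_{x_0}$ (paths starting at $x_0$, up to homotopy rel endpoints). The idea is to shrink each distinguished path back along itself. Define
$$H:\tx\times I\to\tx,\qquad H([\gamma],u)=\bigl[\,g([\gamma])|_{[0,1-u]}\,\bigr],$$
where $g([\gamma])|_{[0,1-u]}$ denotes the path $\tau\mapsto g([\gamma])((1-u)\tau)$. Then $H([\gamma],0)=[g([\gamma])]=[\gamma]$, since $g$ is a section of $q$. Also $H([\gamma],1)$ is the class of the constant path at $g([\gamma])(0)=x_0$. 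So $H$ is a homotopy from the identity to a constant map, provided it is well defined and continuous.

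Well-definedness needs one remark. $g([\gamma])$ is homotopic rel endpoints to $\gamma$, so it starts at $x_0$. Hence every truncation again starts at $x_0$ and defines a point of $s^{-1}(x_0)$.

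Continuity is the main step. Write $H$ as the composite
$$\tx\times I\xrightarrow{\;g|\times\mathrm{id}\;}\xtoi\times I\xrightarrow{\;r\;}\xtoi\xrightarrow{\;q\;}\pix,$$
where $r(\alpha,u)(\tau)=\alpha((1-u)\tau)$. The first map is continuous because $g$ is. The last map is continuous because $q$ is a quotient map (the Proposition above). The image lies in $s^{-1}(x_0)$, which carries the subspace topology. So it remains to check that $r$ is continuous for the compact-open topology. Since $I$ is locally compact Hausdorff, the exponential law applies: continuity of $r$ is equivalent to continuity of the adjoint map $\xtoi\times I\times I\to X$, $(\alpha,u,\tau)\mapsto \mathrm{ev}(\alpha,(1-u)\tau)$. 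This adjoint is continuous because the evaluation map $\xtoi\times I\to X$ is continuous.

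The only real content is thus that $g$ is continuous on all of $\pix$, or at least on the fiber $s^{-1}(x_0)$. That is exactly the dpp hypothesis. No local-geodesic uniqueness argument is needed, unlike in the metric Cartan--Hadamard proof, because continuity of the choice is assumed. I expect the only delicate point to be topological bookkeeping: making sure the subspace topology on $s^{-1}(x_0)$ agrees with the usual topology on $\tx$. This is the content of the Remark, since the basic sets $S(U,V,[\gamma])$ meet the fiber in the standard basic sets of the universal cover.

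Since $\tx$ is contractible, $\pi_n(\tx)=0$ for all $n$. The covering map $\tx\to X$ induces isomorphisms $\pi_n(\tx)\cong\pi_n(X)$ for $n\ge2$, so $X$ is aspherical.
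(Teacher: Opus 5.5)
Your proof is correct and is essentially the paper's argument: the paper also contracts $\widetilde{X}=s^{-1}(x_0)$ by truncating the distinguished path $g([\gamma])$, and gets continuity from the same composite $q\circ r\circ(g\times\mathrm{id})$. The only difference in the construction is that the paper uses $r_s(\gamma)(t)=\gamma(ts)$, so its homotopy runs in the opposite direction to yours. Your justifications of well-definedness and of the continuity of $r$ via the exponential law supply details the paper leaves implicit.
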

	
		\begin{proof}
		Take a base point $x_0$ and let $\widetilde X=\widetilde{X}_{x_0}=s^{-1}(x_0)$ be the universal cover with base point $x_0$.	Let $r:\xtoi \times I \to \xtoi$ be the map $r(\gamma, s)=r_s(\gamma)$ with $r_s(\gamma)(t)=\gamma(t.s)$.

The map $H:\widetilde X\times I \to \widetilde X$ defined as
		$$H([\gamma],s)=[r_s(g[\gamma])]$$
		is a homotopy between the identity and the class of the constant map $x_0$. Note that $H$ is continuous since it is the co-restriction of the following composition of maps
		
	\begin{center}
	\begin{tikzpicture}[>=stealth, baseline=(current bounding box.center)]
		
		\node (A) at (0,0) {$\widetilde{X}\times I$};
		\node (B) at (3.6,0) {$\xtoi\times I$};
		\node (C) at (6.9,0) {$\xtoi$};
		\node (D) at (9.3,0) {$\Pi(X).$};
		
		\draw[->] (A) -- node[above] {$g\times \mathrm{id}$} (B);
		\draw[->] (B) -- node[above] {$r$} (C);
		\draw[->] (C) -- node[above] {$q$} (D);
	\end{tikzpicture}
\end{center}
	\end{proof}	
	
	The previous result asserts that if $X$ has the dpp then it is aspherical. In Section \ref{dppforcovers} we will prove that any aspherical CW-complex that is locally finite or countable has the dpp. We will show now that any contractible and locally path connected space (not necessarily with the same homotopy type as a CW-complex) has the dpp.
	
\begin{remark}
If $X$ is simply connected, then the map $s\times t:\pix\to X\times X$, defined by $$s\times t([\gamma])=(\gamma(0),\gamma(1))$$ is a homeomorphism.
\end{remark}

	\begin{proposition}
		If $X$ is contractible and locally path connected then it has the dpp.
		\end{proposition}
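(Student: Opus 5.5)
Since $X$ is contractible it is simply connected, and $X$ is good by the standing assumptions, so by the preceding Remark the map $s\times t:\pix\to X\times X$ is a homeomorphism. It therefore suffices to build a continuous map $\sigma:X\times X\to\xtoi$ with $\sigma(x,y)(0)=x$ and $\sigma(x,y)(1)=y$. Then $g=\sigma\circ(s\times t)$ is continuous. Moreover $[g([\gamma])]=[\gamma]$, because in a simply connected space any two paths with the same endpoints are homotopic relative endpoints. In other words, the dpp reduces to showing that $X$ admits a continuous motion planner in the sense of topological complexity, i.e.\ that $TC(X)=0$ with a global section.

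To build $\sigma$, I would use a contraction $H:X\times I\to X$ with $H(x,0)=x$ and $H(x,1)=x_0$. Put $\alpha_x(t)=H(x,t)$, which is a path from $x$ to $x_0$. Then define
$$\sigma(x,y)=\alpha_x*\overline{\alpha_y},$$
where $\overline{\alpha_y}$ denotes the reversed path. Explicitly,
$$\sigma(x,y)(t)=H(x,2t) \text{ for } t\le 1/2, \qquad \sigma(x,y)(t)=H(y,2-2t) \text{ for } t\ge 1/2.$$
These two formulas agree at $t=1/2$, where both give $x_0$, so $\sigma(x,y)$ is a genuine path from $x$ to $y$.

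The remaining step is continuity of $\sigma$ into the compact-open topology. By the exponential law, a map $Z\to\xtoi$ is continuous whenever its adjoint $Z\times I\to X$ is continuous; this direction needs no hypotheses on $X$, only that $I$ is locally compact Hausdorff. Here the adjoint $(x,y,t)\mapsto\sigma(x,y)(t)$ is obtained by gluing two continuous maps on the closed sets $X\times X\times[0,1/2]$ and $X\times X\times[1/2,1]$, so it is continuous by the pasting lemma.

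The only real subtlety is the first reduction, namely that $s\times t$ is a homeomorphism; that is exactly where local path connectedness (goodness) enters, and I take it from the Remark. A technical point to watch is that $H$ need not fix $x_0$, but the construction above never uses that property.
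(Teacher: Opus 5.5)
Your proposal is correct and follows the paper's proof: you use the Remark that $s\times t:\pix\to X\times X$ is a homeomorphism for simply connected $X$, set $g'(x,y)=\hat H(x)*\overline{\hat H}(y)$ built from the contraction, and compose with that homeomorphism. You also spell out the continuity of the concatenation (adjoint plus pasting lemma) and why $[g([\gamma])]=[\gamma]$, which the paper leaves implicit.
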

		
		\begin{proof}
			Let $H:X\times I\to X$ be a homotopy with $H_0=id_X$ and $H_1=c_0$ (the constant map $x_0$, for some $x_0\in X$).
			
		Take $\hat H:X\to \xtoi$, $\hat H(x)(t)=H(x,t)$ and define $g':X\times X\to \xtoi$ by $$g'(x,y)=\hat H(x)*\overline{\hat H}(y),$$ the concatenation of the path $\hat H(x)$ with the reverse of the path $\hat H(y)$.
		
		By the previous remark, $\pix$ is homeomorphic to $X\times X$. We obtain the desired map $g:\pix\to\xtoi$ by composing this homeomorphism with the map $g'$.
		\end{proof}
		
\section{Basic properties of $\pix$}\label{basics}

\begin{remark}
	The assignement $X\to\pix$ is functorial. A map $f:X\to Y$ induces a map $f_*:\pix\to\Pi(Y)$, $f_*([\gamma])=[f\gamma]$.
	\end{remark}
	
	Since $\pix$ locally looks like $X\times \tx$, it follows that, if $X$ is good, then $\Pi(X)$ is good. The next result asserts that $\Pi$  preserves covering maps and deck transformation groups.
	
	\begin{thm}\label{teocov}
		If $p:Y\to X$ is a covering map, then $p_*:\Pi(Y)\to \pix$ is also a covering map. Moreover, the deck transformation groups $Deck(Y/X)$ and $Deck(\Pi(Y)/\pix)$ are isomorphic. Also, if $\tx$ is the universal cover of $X$, then $\Pi(\tx)$ is the universal cover of $\pix$.
		\end{thm}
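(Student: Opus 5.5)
The plan is to verify the covering property directly on the basic open sets $S(U,V,[\gamma])$, using evenly covered nice sets. Then I identify the deck groups through path lifting, and finally deduce the universal cover statement from simple connectivity of $\Pi(\tx)$.

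\emph{Step 1: covering.} Given $[\gamma]\in\pix$, choose nice open sets $U\ni\gamma(0)$ and $V\ni\gamma(1)$ that are also path connected and evenly covered by $p$; shrinking nice sets keeps them nice, since these form a basis. I claim $S(U,V,[\gamma])$ is evenly covered by $p_*$. A path $\tilde\gamma$ in $Y$ lifting $\gamma$ is determined by its initial point $\tilde\gamma(0)\in p^{-1}(\gamma(0))$. Let $\tilde U$ be the sheet over $U$ containing $\tilde\gamma(0)$ and $\tilde V$ the sheet over $V$ containing $\tilde\gamma(1)$. I will show that
\[
p_*^{-1}(S(U,V,[\gamma]))=\bigsqcup_{\tilde\gamma} S(\tilde U,\tilde V,[\tilde\gamma]),
\]
and that each restriction $p_*:S(\tilde U,\tilde V,[\tilde\gamma])\to S(U,V,[\gamma])$ is a bijection, hence a homeomorphism, since $p_*$ is continuous and open (it carries basic sets onto basic sets). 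Here $\tilde U,\tilde V$ are nice because $p$ maps them homeomorphically onto nice sets and $p_*$ is injective on $\pi_1$.
\begin{itemize}
\item \emph{Surjectivity:} lift $\delta_1*\gamma*\delta_2$ starting in $\tilde U$.
\item \emph{Injectivity:} use the homotopy lifting property, since homotopies rel endpoints lift.
\item \emph{Disjointness:} the sets are disjoint because $s$ distinguishes the sheets over $U$.
\end{itemize}
Path connectedness of $\Pi(Y)$ is inherited from the fiber bundle structure over the path connected $Y$ with path connected fiber.

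\emph{Step 2: deck groups.} Define $\Phi:Deck(Y/X)\to Deck(\Pi(Y)/\pix)$ by $h\mapsto h_*$. Functoriality gives $p_*h_*=(ph)_*=p_*$, so $\Phi$ is a homomorphism with inverse candidate built as follows. $\Phi$ is injective because $s\circ h_*=h\circ s$ and $s$ is surjective.

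For surjectivity, take a deck transformation $k$ of $p_*$. Constant paths form a copy $c(Y)\subset\Pi(Y)$ of $Y$, namely $c(y)=[\text{const}_y]$, which is a continuous section of $s$. The set $c(Y)$ is exactly the set of points of $p_*^{-1}(c(X))$. Since $p_*k=p_*$, $k$ maps $p_*^{-1}(c(X))$ to itself, and I must check it maps $c(Y)$ to $c(Y)$. Note that $p_*^{-1}(c(X))$ consists of classes of lifts of constant paths, which are constant. Hence $k$ restricts to a homeomorphism $h=s\circ k\circ c$ of $Y$ over $X$, i.e. a deck transformation. Then $k$ and $h_*$ are two deck transformations of the connected cover $\Pi(Y)$ agreeing at a point, so by uniqueness of lifts they coincide. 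This requires $\Pi(Y)$ connected and locally path connected, which holds since it is good.

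\emph{Step 3: universal cover.} Apply Step 1 with $Y=\tx$; it remains to show $\Pi(\tx)$ is simply connected. By the remark that $s\times t:\Pi(\tx)\to\tx\times\tx$ is a homeomorphism for simply connected spaces, $\Pi(\tx)\cong\tx\times\tx$, which is simply connected.

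The main obstacle is the careful topological verification in Step 1 that $p_*$ restricted to each $S(\tilde U,\tilde V,[\tilde\gamma])$ is a homeomorphism onto $S(U,V,[\gamma])$. This needs the correspondence between basic sets under lifting, including the fact that sub-basic sets with smaller nice neighborhoods correspond as well, and a check that distinct lifts give genuinely disjoint sets.
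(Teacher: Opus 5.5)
Your proposal is correct and follows essentially the same route as the paper's proof: $S(U,V,[\gamma])$ is evenly covered with preimage the disjoint union of the sets $S(\tilde U,\tilde V,[\tilde\gamma])$ over the lifts of $\gamma$; a deck transformation of $\Pi(Y)$ is recovered from its restriction to constant-path classes and identified with $h_*$ by uniqueness of lifts on the connected $\Pi(Y)$; and $\Pi(\tx)\cong\tx\times\tx$ is simply connected. You are in fact slightly more careful than the paper, since you check that the sheets are nice and that $p_*^{-1}(c(X))=c(Y)$. The openness point you flag is handled by restricting to the basis of sets $S(A,B,[\beta])$ with $A,B$ nice and contained in sheets, because on these $p_*$ carries basic sets onto basic sets $S(p(A),p(B),[p\beta])$.
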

	\begin{proof}
		In order to prove that $p_*$ is a covering map, we have to find an evenly covered open set around any $[\gamma]\in\pix$. Given $[\gamma]$, take $U,V\subseteq X$ nice open subsets evenly covered by $p$ and such that $\gamma(0)\in U$ and $\gamma(1)\in V$. We will show that $S(U,V,[\gamma])$ is evenly covered by $p_*$.
		
		Since $U$ is evenly covered by $p$, then $\displaystyle p^{-1}(U)=\coprod_{\alpha\in\Lambda} U_\alpha$
		where
		$p_\alpha^U=p|_{U_\alpha}\colon U_\alpha\to U$
		is a homeomorphism for each $\alpha$. Let $\delta_\alpha$ be the lifting of the
		path $\gamma$ starting in
				$x_{\alpha}$, the unique point in $p^{-1}(\gamma(0))\cap U_\alpha$.
		Let $y_\alpha=\delta_\alpha(1)$.
		Since $V$ is evenly covered,
		$p^{-1}(V)=\coprod_{\alpha\in\Lambda} V_\alpha$
		with $y_\alpha\in V_\alpha$. Let
		$p_\alpha^V=p|_{V_\alpha}\colon V_\alpha\to V.$
		
		It is not difficult to see that
		\[
		(p_*)^{-1}\bigl(S(U,V,[\gamma])\bigr)
		=
		\coprod_{\alpha\in\Lambda}
		S(U_\alpha,V_\alpha,[\delta_\alpha]).
		\]
		
		Note that if $[\eta*\gamma*\mu]\in S(U,V,[\gamma])$, then
		$\bigl[(p_\alpha^U)^{-1}\eta
		*
		\delta_\alpha
		*
		(p_\alpha^V)^{-1}\mu
		\bigr]
		\in
		S(U_\alpha,V_\alpha,[\delta_\alpha])$
		is a lifting.
		
		It is clear that
		$p_*|_{S(U_\alpha,V_\alpha,[\delta_\alpha])}:S(U_\alpha,V_\alpha,[\delta_\alpha])\to S(U,V,[\gamma])$
		is a bijection and it is open by
		the definition of the topology on $\Pi(Y)$.
		This proves that
		$p_*:\Pi(Y)\to\Pi(X)$
		is a covering.
		
		Now let $G=Deck(Y/X)$. If
		$h:Y\to Y\in G$, then
		$h_*:\Pi (Y)\to\Pi (Y)$
		$\in Deck(\Pi(Y)/\Pi(X))$.
		Note that
		$p_*h_*=p_*$
		since $\Pi$ is functorial.
		
		This defines a group homomorphism
		\[
		L:Deck(Y/X)\longrightarrow Deck(\Pi(Y)/\Pi(X))
		\]
		\[
		h\longmapsto h_*.
		\]
		
		It is clear that $L$ is a monomorphism.
		We show that it is surjective. Given
		$\ell\in Deck(\Pi(Y)/\Pi(X))$, we show that there exists 	$\hat{\ell}\in Deck(Y/X)$ such that $L(\hat{\ell})=\hat{\ell}_*=\ell$.
		We can view $Y\subset\Pi(Y)$
		as the subspace of classes of constant paths $Y=\{[c_y],\ y\in Y\}$.
		The key point is that the restriction of $\ell$ to $Y$ induces an element
		$\hat{\ell}\in Deck(Y/X)$: if $[\beta]=\ell([c_y])$, then $[p\beta]=[c_{p(y)}]$ and since $p:Y\to X$ is a covering, $[\beta]=[c_{y'}]$ for some $y'\in Y$.
		It follows that $L(\hat{\ell})=\hat{\ell}_*=\ell$
		since $\hat{\ell}_*$ and $\ell$ coincide in the classes of constant paths and $\Pi(Y)$ is path connected.
		
		Finally, if $p:\tx\to X$ is the universal cover, then $p_*:\Pi(\tx)\to\pix$ is the universal cover as well, since, in this case, $\Pi(\tx)=\tx\times \tx$ is simply connected.
		\end{proof}
		
	\begin{corollary}\label{mismopiuno}
		Let $X$ be a space,then 
	\[
	\begin{aligned}
		\pi_1(\pix) &= \pi_1(X) \ \text{ and }\\
		\pi_n(\pix) &= \pi_n(X)\times\pi_n(X) \quad  \forall n\geq 2.
	\end{aligned}
	\]
		 In particular, $X$ is aspherical if and only if $\pix$ is aspherical.
		\end{corollary}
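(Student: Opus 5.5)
The plan is to use Theorem \ref{teocov} for $\pi_1$ and for the higher homotopy groups. By that theorem, $p_*:\Pi(\tx)\to\pix$ is the universal cover of $\pix$. Its deck transformation group is isomorphic to $Deck(\tx/X)\cong\pi_1(X)$. Since $\pix$ is good (it is path connected, and it is locally like $U\times\tx$), the deck group of its universal cover is $\pi_1(\pix)$. This gives $\pi_1(\pix)\cong\pi_1(X)$.

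For $n\geq 2$, a covering map induces isomorphisms on $\pi_n$, so $\pi_n(\pix)\cong\pi_n(\Pi(\tx))$. Since $\tx$ is simply connected, the earlier remark gives a homeomorphism $\Pi(\tx)\cong\tx\times\tx$ via $s\times t$. Hence
\[
\pi_n(\Pi(\tx))\cong\pi_n(\tx)\times\pi_n(\tx)\cong\pi_n(X)\times\pi_n(X),
\]
where the last step uses again that $\tx\to X$ is a covering and $n\geq2$.

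As an independent check for $\pi_n$, one can use the long exact sequence of the fiber bundle $s:\pix\to X$ with fiber $\tx$. The bundle has a section $x\mapsto[c_x]$, so the sequence splits: for $n\geq2$ the groups are abelian and $\pi_n(\pix)\cong\pi_n(\tx)\oplus\pi_n(X)$. The covering argument above is cleaner, though, and I would present that one.

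For the final claim, $X$ is aspherical iff $\pi_n(X)=0$ for all $n\geq2$. This holds iff $\pi_n(X)\times\pi_n(X)=0$ for all $n\geq 2$, which by the above holds iff $\pi_n(\pix)=0$ for all $n\geq2$, i.e.\ iff $\pix$ is aspherical.

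The only delicate point is basepoints and path-connectedness of $\pix$. $\pix$ is path connected because any $[\gamma]$ can be joined to the constant class $[c_{\gamma(0)}]$ inside the fiber $s^{-1}(\gamma(0))\cong\tx$, and $X$ itself is path connected. So the groups are well defined up to isomorphism. The identification of $\pi_1$ with the deck group of the universal cover uses that $\pix$ is good, which the paper already notes.
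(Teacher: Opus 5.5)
Your proposal is correct and is essentially the paper's argument: the corollary is read off from Theorem~\ref{teocov}, where $\Pi(\tx)=\tx\times\tx$ is the universal cover of $\pix$ with deck group $Deck(\tx/X)\cong\pi_1(X)$, together with the fact that coverings induce isomorphisms on $\pi_n$ for $n\geq 2$. Your long-exact-sequence check, which you make slightly sharper by splitting the sequence with the section $x\mapsto[c_x]$, is the alternative the paper records in Remark~\ref{longexact}.
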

		
	\begin{remark}\label{longexact}
		 Note that Corollary \ref{mismopiuno} can also be deduced from the long exact sequence of homotopy groups
		\[
		\scalebox{0.95}{$
		\cdots \longrightarrow
		\pi_3(X) \longrightarrow
		\pi_2(\widetilde{X})
		\longrightarrow
		\pi_2(\Pi(X))
		\xrightarrow{\,s_*\,}
		\pi_2(X)
		\longrightarrow
		\pi_1(\widetilde{X})
		\longrightarrow
		\pi_1(\Pi(X))
		\xrightarrow{\,s_*\,}
		\pi_1(X)
		\longrightarrow
		0
		$}
		\]
		associated to the fibration $s:\pix\to X$ with fiber $\tx$. In particular, if $X$ is aspherical, $s:\pix\to X$ is a weak equivalence.
		\end{remark}
	
	We will show now that the functor $\Pi$ preserves CW-structures and smooth structures. 
	
	\begin{proposition}
		If $X$ is a locally finite or countable CW-complex, then $\pix$ is  a (locally finite or countable) CW-complex.
		\end{proposition}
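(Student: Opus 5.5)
The plan is to use that $\Pi(X)$ is a quotient of the universal cover of $X\times X$. The map $\Pi(\tx)\to\pix$ is the universal cover (Theorem \ref{teocov}), and since $\tx$ is simply connected, $\Pi(\tx)\cong\tx\times\tx$ by the Remark on simply connected spaces, via $s\times t$. Moreover $p\times p:\tx\times\tx\to X\times X$ is a covering with deck group $G\times G$, where $G=Deck(\tx/X)\cong\pi_1(X)$. Under this identification, the covering $p_*:\tx\times\tx\to\pix$ is the quotient by the diagonal subgroup $\Delta G\subset G\times G$, acting by $g\cdot(a,b)=(ga,gb)$. This is exactly the isomorphism $Deck(\tx/X)\cong Deck(\Pi(\tx)/\pix)$ of Theorem \ref{teocov}, $h\mapsto h_*$, read through $s\times t$. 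So we get
\[
\pix\cong(\tx\times\tx)/G, \qquad \text{diagonal action.}
\]

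Next, give $\tx$ the lifted CW-structure: the cells are the lifts of cells of $X$, and $G$ permutes them freely, so the action is cellular and free. If $X$ is locally finite, $\tx$ is locally finite. If $X$ is countable, then $\pi_1(X)$ is countable (it is generated by the finitely many edges of the $1$-skeleton meeting relations, countably many generators in total), so $\tx$ has countably many cells. In either case the product cell structure on $\tx\times\tx$ is a genuine CW-complex, i.e.\ the product topology agrees with the weak topology. This holds when one factor is locally compact (the locally finite case), or when both factors are countable. This is exactly where the hypothesis enters.

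The diagonal $G$-action on $\tx\times\tx$ permutes the product cells $e\times e'$. It acts freely on cells, and an element fixing a cell setwise fixes it pointwise, because it is already free on the first factor's cells. A free cellular action of this kind makes the quotient a CW-complex whose cells are the orbits of cells, and the quotient map is a covering. Hence $\pix$ is a CW-complex, with one cell for each $G$-orbit of pairs of cells $(e,e')$, i.e.\ each pair (cell $e_0$ of $X$, lift of a cell of $X$ to $\tx$).

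Finally, the finiteness conditions descend. Countability is clear, since the quotient has at most as many cells as $\tx\times\tx$. For local finiteness, a neighborhood of a point downstairs is the homeomorphic image of a neighborhood upstairs meeting finitely many cells, because the quotient is a covering. The main obstacle is the product-topology issue in the third paragraph, together with checking that the homeomorphism $\Pi(\tx)\cong\tx\times\tx$ of the Remark intertwines the deck actions as claimed. For the latter, one checks directly that $h_*[\gamma]=[h\gamma]$ has endpoints $(h\gamma(0),h\gamma(1))$.
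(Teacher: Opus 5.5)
Your proof is correct and takes the same route as the paper. You identify $\pix\cong\Pi(\tx)/G=(\tx\times\tx)/G$ with the diagonal action, use local finiteness or countability of $\tx$ so that the product topology on $\tx\times\tx$ is the CW topology, and pass the free cellular $G$-action to the quotient. You also supply details the paper leaves implicit (freeness of the diagonal action on product cells, descent of the finiteness conditions), and only your parenthetical on why $\pi_1(X)$ is countable is garbled: the point is that $\pi_1(X)$ is a quotient of the free group $\pi_1(X^1)$ on the countably many edges outside a maximal tree.
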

		
		\begin{proof}
			If $X$ is locally finite, its universal cover $\tx$ is also locally finite, which implies that $\tx\times \tx$ (with the product topology) has a natural CW-complex structure. If $X$ is countable, then $\pi_1(X)$ is a countable group and $\tx$ is a countable CW-complex as well, and therefore $\tx\times \tx$, with the product topology, is a CW-complex. In both cases, $\tx$ is in fact a $G$-CW-complex, where $G=\pi_1(X)$ is viewed as the deck transformation group $Deck(\tx/X)$. And the free left action of $G$ on $\tx$ is a nice action, in the sense that the cells in $\tx$ are attached equivariantly, since its structure is lifted from the one of $X$ (see also \cite[Section II.1]{td}). Then $\Pi(\tx)=\tx\times \tx$ inherits also a nice $G$-action. In  this case, $G$ acts diagonally. Therefore $\pix=\Pi(\tx)/G$ has a well defined CW-structure.
			\end{proof}
			
			\begin{corollary}
				If $X$ is a locally finite or countable aspherical CW-complex, then $s:\pix\to X$ is a homotopy equivalence.
				\end{corollary}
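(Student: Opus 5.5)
The plan is to combine the preceding proposition, that $\pix$ is a CW-complex, with Remark \ref{longexact} and Whitehead's theorem. First, since $X$ is locally finite or countable, the preceding proposition gives $\pix$ a CW-structure, namely $\pix=(\tx\times\tx)/G$ with $G=\pi_1(X)$. Thus both $\pix$ and $X$ have the homotopy type of CW-complexes. Note that $\pix$ is path connected, being the quotient of the path connected space $\tx\times\tx$.

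Next I would show that $s:\pix\to X$ is a weak homotopy equivalence. The map $s$ is a locally trivial fiber bundle with fiber $\tx$, and $X$ is a CW-complex, hence paracompact. So $s$ is a Hurewicz fibration, or at least a Serre fibration, which is all that the long exact sequence requires. Since $X$ is aspherical, its universal cover $\tx$ is weakly contractible, so $\pi_n(\tx)=0$ for all $n\geq 0$. The long exact sequence of Remark \ref{longexact} then shows that $s_*:\pi_n(\pix)\to\pi_n(X)$ is an isomorphism for every $n\geq1$, at every basepoint. This is exactly the final sentence of Remark \ref{longexact}.

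Finally, Whitehead's theorem applies to a weak equivalence between CW-complexes, or between spaces of the homotopy type of CW-complexes, and upgrades $s$ to a homotopy equivalence.

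The only delicate point is the first step. Whitehead's theorem needs $\pix$ with its given topology, the one generated by the sets $S(U,V,[\gamma])$, to be of CW homotopy type. The preceding proposition supplies this, but I would check two things there.
\begin{itemize}
\item The identification $\Pi(\tx)\cong\tx\times\tx$ is a homeomorphism. This follows from the remark on simply connected spaces.
\item The covering map $p_*:\Pi(\tx)\to\pix$ from Theorem \ref{teocov} identifies $\pix$ with the orbit space $(\tx\times\tx)/G$, so that the CW topology coincides with the bundle topology.
\end{itemize}
Granting these, the argument is straightforward.
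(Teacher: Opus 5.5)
Your proposal is correct and follows the paper's proof exactly: the CW structure on $\pix$ from the preceding proposition, the weak equivalence from Remark \ref{longexact} (the fiber $\tx$ is weakly contractible), and then Whitehead's theorem. The extra point you flag, that the CW topology on $(\tx\times\tx)/G$ agrees with the topology generated by the sets $S(U,V,[\gamma])$, is left implicit in the paper, and it is a reasonable thing to verify.
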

				\begin{proof}
					This follows from the previous result, Remark \ref{longexact} and Whitehead's Theorem.
					\end{proof}
					
\begin{proposition}\label{smooth}
	If $M$ is a smooth manifold of dimension $n$, then $\Pi(M)$ is a smooth manifold of dimension $2n$. Moreover $s:\Pi(M)\to M$ is a smooth bundle.
\end{proposition}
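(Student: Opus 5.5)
The plan is to give $\Pi(M)$ a smooth structure through the universal cover, in the same way the proof of the CW-structure result above passes through $\Pi(\tx)$. Let $p:\widetilde M\to M$ be the universal cover and put the pulled-back smooth structure on $\widetilde M$. Then $p$ is a local diffeomorphism and $G=Deck(\widetilde M/M)$ acts on $\widetilde M$ freely and properly discontinuously by diffeomorphisms. By Theorem \ref{teocov}, $p_*:\Pi(\widetilde M)\to\Pi(M)$ is the universal cover and its deck group is identified with $G$. Since $\widetilde M$ is simply connected, $s\times t:\Pi(\widetilde M)\to\widetilde M\times\widetilde M$ is a homeomorphism, and under it the induced action $h\mapsto h_*$ becomes the diagonal action $h(x,y)=(hx,hy)$.

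The diagonal action is again free and properly discontinuous, since it is so already on the first factor, and it acts by diffeomorphisms of the $2n$-manifold $\widetilde M\times\widetilde M$. The standard quotient theorem for smooth manifolds then gives a unique smooth structure on $(\widetilde M\times\widetilde M)/G$ for which the projection is a smooth covering. This quotient is homeomorphic to $\Pi(M)$ via $p_*$, because a covering map is a quotient by its deck group when it is regular, and the universal cover is regular. So $\Pi(M)$ is a smooth manifold of dimension $2n$. It is Hausdorff and second countable because it is locally diffeomorphic to $\widetilde M\times\widetilde M$, and it is a fiber bundle over the second countable manifold $M$ with fiber $\widetilde M$; second countability of $\widetilde M$ follows since $\pi_1(M)$ is countable.

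For the bundle statement, $s\circ p_*=p\circ pr_1$ on $\widetilde M\times\widetilde M$. The right-hand side is smooth, and $p_*$ is a local diffeomorphism, so $s$ is smooth. It remains to check that the local trivializations $\varphi:s^{-1}(U)\to U\times\widetilde M$ from the earlier Proposition are diffeomorphisms when $U$ is a nice open set that is also evenly covered by $p$, for instance a coordinate ball.

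\textbf{Trivialization step.} Choose a sheet $\widetilde U\subset\widetilde M$ over $U$. Then $s^{-1}(U)$ is the diffeomorphic image under $p_*$ of $\widetilde U\times\widetilde M\subset\widetilde M\times\widetilde M$, since every $G$-orbit meets it exactly once. Under this identification $\varphi$ becomes $(x,y)\mapsto(p(x),y')$, where $y'$ is the point $y$ re-expressed in the fiber $s^{-1}(p_0)$. Concretely, fix $\tilde p_0\in\widetilde U$ over $p_0$. Then $[\lambda_{p_0x}*\gamma]$ corresponds to $(\tilde p_0,y)$, so $y'=y$ and $\varphi\circ p_*=(p|_{\widetilde U})\times\mathrm{id}$, which is a diffeomorphism.

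I expect the main obstacle to be this bookkeeping: verifying carefully that the homeomorphisms coming from Theorem \ref{teocov} and the earlier Proposition agree with the product description, so that $\varphi$ really is $(p|_{\widetilde U})\times\mathrm{id}$. Once that is in place, the transition maps are automatically smooth and $s$ is a smooth bundle.
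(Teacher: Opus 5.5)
Your argument is correct, but it is organized differently from the paper's. The paper works purely locally. It takes the bundle charts $\varphi:s^{-1}(U)\to U\times\widetilde{M_p}$ as the atlas, with $\widetilde{M_p}$ carrying the structure pulled back from $M$. It then checks that the transition maps $(x,[\gamma])\mapsto(x,[\lambda_{qx}*\lambda_{xp}*\gamma])$ are smooth: their second component, followed by the local diffeomorphism $t:\widetilde{M_q}\to M$, is just the covering map $(x,[\gamma])\mapsto\gamma(1)$.

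You instead build the smooth structure globally, as the quotient $(\widetilde M\times\widetilde M)/G$ under the diagonal action, mirroring the paper's CW-structure proposition. You then verify afterwards that each trivialization equals $\bigl((p|_{\widetilde U})\times\mathrm{id}\bigr)\circ(p_*|_{\widetilde U\times\widetilde M})^{-1}$. Your explicit lifting computation at $\tilde p_0$ handles this correctly.

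The paper's route is shorter and needs neither Theorem \ref{teocov} nor the smooth quotient theorem. Yours costs that machinery and some bookkeeping, but it gives more:
\begin{itemize}
\item a canonical structure for which $p_*:\widetilde M\times\widetilde M\to\Pi(M)$ is a smooth covering;
\item transition functions that are visibly locally constant deck transformations;
\item an explicit treatment of Hausdorffness and second countability, which the paper leaves implicit.
\end{itemize}

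One small point of care. Being locally diffeomorphic to $\widetilde M\times\widetilde M$ does not by itself give Hausdorffness. Also, a merely ``properly discontinuous'' (covering-space) action is not quite the hypothesis of the smooth quotient theorem, which asks for a proper action. Both are fine here: the diagonal action is in fact proper, and the quotient is identified with the bundle $\Pi(M)$ over the Hausdorff manifold $M$ with Hausdorff fiber $\widetilde M$, as you also note.
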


\begin{proof}
We only have to prove that the transition maps between the local trivializations are smooth. Recall that, if $p\in M$ and $U$ is a nice open set around $p$, the local trivialization map is defined as $\varphi([\gamma])=(\gamma(0),[\lambda_{px}*\gamma])$ where $\lambda_{px}$ is any path in $U$ from $p$ to $x$. 

\begin{center}
	
	\begin{tikzpicture}[scale=0.5,
		node distance=4cm,
		every node/.style={font=\large}
		]

		\node (A) at (0,4)   {$S^{-1}(U)$};
		\node (B) at (6,4)   {$U \times \widetilde{M_p}$};
		\node (C) at (3,0)   {$U$};

		\draw[->] (A) -- (B) node[midway, above] {$\varphi$};

		\draw[->] (A) -- (C) node[midway, left] {$s$};

		\draw[->] (B) -- (C) node[midway, right] {$pr_1$};

		\node at (3, 2.6) {$\equiv$};
		
	\end{tikzpicture} 
\end{center}

The transition maps
$$\psi\varphi^{-1}:(U\cap V)\times \widetilde{M_p}\to (U\cap V)\times \widetilde{M_q}$$
are defined by $\psi\varphi^{-1}(x,[\gamma])=(x,[\lambda_{qx}*\lambda_{xp}*\gamma])$.

These maps are smooth because the first coordinate is the identity, while the composition of the second coordinate $(U\cap V)\times\widetilde{M_p}\to\widetilde{M_q}$ with the covering map $t:\widetilde{M_q}\to M$ is the map $(x,[\gamma])\to \gamma(1)$, which is precisely the covering map $\widetilde{M_p}\to M$.
\end{proof}

\begin{thm}\label{isobundles}
	If $M$ is a complete Riemannian manifold of nonpositive curvature, then $TM$ and $\Pi(M)$ are isomorphic as fiber bundles over $M$.
\end{thm}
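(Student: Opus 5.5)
The plan is to write down an explicit map $\Phi:TM\to\Pi(M)$ given by the exponential map and show it is a fibrewise homeomorphism commuting with the projections. For $v\in T_xM$ set
$$\Phi(v)=[\gamma_v],\qquad \gamma_v(t)=\exp_x(tv),\ t\in I,$$
the class (relative endpoints) of the geodesic segment starting at $x$ with initial velocity $v$. By completeness (Hopf--Rinow) $\gamma_v$ is defined on all of $I$, and clearly $s(\Phi(v))=x=\pi(v)$. So $\Phi$ is a map of bundles over $M$, and the work is to show it is a homeomorphism.

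\textbf{Bijectivity on fibres.} Fix $x\in M$. Under the identification $s^{-1}(x)=\widetilde{M}_x$, the composite $t\circ\Phi|_{T_xM}$ equals $\exp_x:T_xM\to M$. By the classical Cartan--Hadamard theorem, $\exp_x$ is a covering map because $M$ is complete with nonpositive curvature (there are no conjugate points, so $\exp_x$ is a local diffeomorphism, and completeness makes it a covering). Since $T_xM$ is simply connected, it is a universal cover. The map $\Phi|_{T_xM}:T_xM\to\widetilde M_x$ is a lift of $\exp_x$ through the covering $t:\widetilde M_x\to M$, with $0\mapsto[c_x]$. Concretely, the lift of $\exp_x$ along the ray $r\mapsto rv$ is exactly $[\gamma_v|_{[0,r]}]$, so $\Phi$ is the unique such lift. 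A map between universal covers commuting with the covering maps and preserving basepoints is a homeomorphism. Hence $\Phi|_{T_xM}$ is a homeomorphism onto the fibre. Equivalently: every homotopy class of paths from $x$ contains exactly one geodesic.

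\textbf{Global continuity and continuity of the inverse.} This is the main obstacle. Here the fibrewise statement must be upgraded to a homeomorphism of total spaces, using the topology on $\Pi(M)$ given by the basic sets $S(U,V,[\gamma])$. For continuity of $\Phi$, I will use that $\Phi$ factors as
$$TM\xrightarrow{\ \hat\ \ } M^{\mathrm I}\xrightarrow{\ q\ }\Pi(M),\qquad v\mapsto\gamma_v.$$
The first map is continuous into the compact-open topology, since $(v,t)\mapsto\exp(tv)$ is smooth on $TM\times I$. The map $q$ is continuous, so $\Phi$ is continuous.

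For the inverse, the first plan is to show $\Phi$ is a local homeomorphism. Work in a local trivialization $\varphi:s^{-1}(U)\to U\times\widetilde M_p$ over a nice convex ball $U$ around $p$, together with a trivialization $TU\cong U\times T_pM$. The composite $U\times T_pM\to U\times\widetilde M_p\to U\times M$, where the second arrow is $\mathrm{id}\times t$, is
$$(x,v)\mapsto(x,\exp_x(v)).$$
This map is a local diffeomorphism, since its differential is invertible by the fibrewise absence of conjugate points. The map $\mathrm{id}\times t$ is a covering and hence a local homeomorphism. Therefore $\varphi\circ\Phi$ is a continuous local homeomorphism (locally it equals the composite of a local diffeomorphism with a local inverse of $\mathrm{id}\times t$). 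Since $\Phi$ is also bijective, it is a homeomorphism.

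Alternatively, the inverse can be seen to be continuous via the continuous dependence of geodesics on endpoints, as in the example of locally convex metric spaces above: geodesics in $S(U,V,[\gamma])$ with $U,V$ convex balls vary continuously, so their initial velocities do too. Finally, I will check that the isomorphism is compatible with the bundle structure. This is automatic, since $\Phi$ commutes with the projections to $M$. By Proposition \ref{smooth} it is in fact a diffeomorphism of smooth bundles.
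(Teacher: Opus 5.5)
Your proof is correct, but it takes a different route from the paper's.

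\textbf{The paper's route.} The paper works inside the path space. It first shows that $v\mapsto c_v$ is a homeomorphism of $TM$ onto the subspace $\bigcup_p\widehat{M_p}\subset M^{\mathrm I}$ of geodesics. Continuity comes from continuous dependence on initial conditions; continuity of the inverse comes from a sub-subsequence compactness argument with the Sasaki metric. It then quotes the metric Cartan--Hadamard theorem (a unique local geodesic in each homotopy class, depending continuously on its endpoints) to conclude that $q$ restricted to geodesics is a homeomorphism onto $\Pi(M)$, whose inverse is the geodesic choice map.

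\textbf{Your route.} You compose directly, $\Phi=q\circ(v\mapsto\gamma_v)$, so continuity comes for free. Bijectivity comes from the Riemannian Cartan--Hadamard theorem: $\exp_x$ is a universal covering and $\Phi|_{T_xM}$ is its based lift through $t$. Openness comes from the inverse function theorem applied to $(\pi,\exp):TM\to M\times M$, which is a local diffeomorphism because there are no conjugate points, read through the covering $\mathrm{id}\times t$ in a local trivialization. One small point: there the second coordinate should be read as $\exp_x(\tau_x v)$ via your trivialization $TU\cong U\times T_pM$.

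\textbf{What each buys.} Your route needs neither the Sasaki-metric compactness argument nor the continuity of the geodesic choice map as an input; both fall out. On geodesics, $(v\mapsto\gamma_v)^{-1}=\Phi^{-1}\circ q$ is continuous. Also $g=(v\mapsto\gamma_v)\circ\Phi^{-1}$ is a continuous choice map, so the dpp for $M$ becomes a consequence rather than an ingredient. You also get that $\Phi$ is a diffeomorphism for the smooth structure of Proposition~\ref{smooth}. The paper's route stays closer to its central theme: it identifies $\Pi(M)$ explicitly with the space of distinguished paths in $M^{\mathrm I}$ and reduces the statement to the metric Cartan--Hadamard theorem.
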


\begin{proof}
	 For any $p\in M$, let $$\widehat{M_p}=\{c:I\to M,\ c \text{ is a local geodesic with } c(0)=p\}.$$ 

We show first that the map $\varphi:TM\to \displaystyle\bigcup_p \widehat{M_p}\subset \mtoi$ defined by $\varphi(p,v)=c_v$ (where $c_v$ is the unique geodesic with $c_v(0)=p$ and $c'_v(0)=v$) is a homeomorphism. To prove this, note first that the topology on $\mtoi$ coincides in this case with the one induced by the uniform (supremum) metric. In order to see that $\varphi$ is continuous one has to see that if $v_n\to v$ in $TM$, then $c_{v_n}\to c_v$ uniformly in $\mtoi$. This follows from the continuous dependence of solutions of differential equations on their initial conditions, together with the fact that, since $M$ is complete, $TM$ admits a natural Riemannian metric (the Sasaki metric) for which it is complete, and the exponential map $TM\to M$ is uniformly continuous on bounded subsets. In order to see that $\varphi^{-1}$ is continuous, take $(c_n)_{n \in \mathbb{N}}$ a sequence of geodesics converging uniformly to a geodesic $c$. We have to show that $\varphi^{-1}(c_n) = \dot{c}_n(0) \rightarrow \dot{c}(0) = \varphi^{-1}(c)$. Let us show that every subsequence of $(c_n)$ has a sub-subsequence that, under $\varphi^{-1}$, converges to $\dot{c}(0)$.
Let $(c_{n_k})_{k \in \mathbb{N}}$ be a subsequence. The sequence $(\dot{c}_{n_k}(0))_{k \in \mathbb{N}} \subseteq TM$ is bounded, since the set $\{c_{n_k}(0) \mid k \in \mathbb{N}\} \subseteq M \subseteq TM$ is bounded and the restriction of the Sasaki metric to each $T_p M$ is the original metric. Furthermore, since $\{d(\dot{c}_{n_k}(0), c_{n_k}(0)) \mid k \in \mathbb{N}\}$ is bounded, then $\{\dot{c}_{n_k}(0)\}$ is bounded, and therefore it has a convergent subsequence  $\dot{c}_{n_{k_j}}(0) \longrightarrow w \in TM$ (we use that $TM$ is complete and locally compact). Since $\varphi$ is continuous, $c_{n_{k_j}} = \varphi(\dot{c}_{n_{k_j}}(0)) \longrightarrow c_w$ uniformly, and therefore $w = \dot{c}(0)$ by uniqueness of the limit.

	 On the other hand, the Cartan-Hadamard theorem implies that the map $q$ restricted to 	$\displaystyle\bigcup_p \widehat{M_p}$ is a homeomorphism (see \cite[II.4.5]{BH}). Now take the composition.
	\[
\begin{tikzpicture}[>=stealth]
	
	\node (A) at (0,0) {$TM$};
	\node (B) at (3.5,0) {$\displaystyle\bigcup_{p}\widehat{M}_p$};
	\node (C) at (7,0) {$\Pi(M)$};
	\node (D) at (3.5,-2.2) {$M$};
	
	\draw[->] (A) -- node[above] {$\varphi$} (B);
	\draw[->] (B) -- node[above] {$q=g^{-1}$} (C);
	
	\draw[->] (A) -- (D);
	\draw[->] (C) -- (D);
	
	\node at (3.5,-1.0) {$\equiv$};
	
\end{tikzpicture}
\]
	\end{proof}	

\section{The dpp for coverings and aspherical CW-complexes}\label{dppforcovers}
	
	If $p:Y\to X$ is a covering map and $G=Deck(Y/X)$, then $\Pi(Y)$ has a natural $G$-action induced by composition: if $\varphi\in G$ and $[\beta]\in\Pi(Y)$, $\varphi[\beta]=[\varphi\beta]$.  Similary, $\ytoi$ is also a $G$-space. 
	 
	\begin{proposition}\label{dppcov}
		Suppose $X$ has the dpp. If $p:Y\to X$  is a covering, then $Y$ has the dpp. Moreover, there exists a choice map $g':\Pi(Y)\to \ytoi$ that is $G$-equivariant, where $G=Deck(Y/X)$.
	\end{proposition}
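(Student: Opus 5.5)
The plan is to lift the choice map of $X$ through the covering. Let $g:\pix\to\xtoi$ be a choice map for $X$. Given $[\beta]\in\Pi(Y)$, the class $p_*[\beta]=[p\beta]\in\pix$ determines a distinguished path $g([p\beta])$ in $X$. Since $[g([p\beta])]=[p\beta]$, this path starts at $p(\beta(0))$. Let $\widetilde{g([p\beta])}$ be its unique lift starting at $\beta(0)$, and set
$$g'([\beta])=\widetilde{g([p\beta])}.$$
This is well defined: homotopic paths $\beta\simeq\beta'$ rel endpoints have the same initial point, and $p\beta\simeq p\beta'$.

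First I check that $g'$ is a section of $q$. The paths $g([p\beta])$ and $p\beta$ are homotopic rel endpoints. By the homotopy lifting property, their lifts starting at $\beta(0)$, namely $g'([\beta])$ and $\beta$, are homotopic rel endpoints. Hence $[g'([\beta])]=[\beta]$.

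Next, $G$-equivariance. For $h\in G$ we have $p h\beta=p\beta$, so $g'(h[\beta])$ is the lift of $g([p\beta])$ starting at $h(\beta(0))$. The path $h\circ g'([\beta])$ is a lift of the same path starting at the same point. By uniqueness of lifts, $g'(h[\beta])=h\, g'([\beta])$.

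The main step is continuity of $g'$. I plan to factor $g'$ as
$$\Pi(Y)\xrightarrow{(s,\,g\circ p_*)} Y\times_X \xtoi \xrightarrow{L} \ytoi,$$
where $Y\times_X\xtoi=\{(y,\alpha): p(y)=\alpha(0)\}$ and $L(y,\alpha)$ is the lift of $\alpha$ starting at $y$. The first map is continuous because $s$, $p_*$ and $g$ are. It remains to show that the path-lifting map $L$ of a covering is continuous for the compact-open topology. For this I would take a subbasic open set $\langle K,W\rangle$ containing $L(y,\alpha)$, with $K\subset I$ compact and $W\subset Y$ open. Using a Lebesgue number, subdivide $I$ into intervals $[t_{i},t_{i+1}]$ with $\alpha([t_i,t_{i+1}])\subset U_i$, each $U_i$ evenly covered. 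Let $\widetilde U_i$ be the sheet containing $L(y,\alpha)([t_i,t_{i+1}])$, shrunk if needed so that $\widetilde U_i\cap p^{-1}$-pieces meet $W$ appropriately on $K\cap[t_i,t_{i+1}]$. A neighbourhood of $(y,\alpha)$ is then given by pairs $(y',\alpha')$ with $y'\in\widetilde U_0$, $\alpha'([t_i,t_{i+1}])\subset U_i$, and $\alpha'(K\cap[t_i,t_{i+1}])\subset p(\widetilde U_i\cap W)$. For such pairs, the lift of $\alpha'$ stays inside the consecutive sheets $\widetilde U_i$. This holds inductively, because the sheets containing $L(y,\alpha)(t_{i+1})$ overlap, so $L(y',\alpha')(t_{i+1})$ lies in both $\widetilde U_i$ and $\widetilde U_{i+1}$ after shrinking the $U_i$ around the subdivision points. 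Consequently $L(y',\alpha')(K)\subset W$.

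I expect this induction to be the main obstacle: one must ensure the lift of $\alpha'$ passes from sheet to sheet correctly. I would handle it by adding the conditions $\alpha'(t_{i})\in p(\widetilde U_{i-1}\cap\widetilde U_i)$, restricted to the relevant component, to the neighbourhood. The rest is routine bookkeeping.
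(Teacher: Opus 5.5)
Your proposal is correct and is essentially the paper's proof: the paper uses the same factorization $[\beta]\mapsto([p\beta],\beta(0))\mapsto(g([p\beta]),\beta(0))\mapsto\ell(g([p\beta]),\beta(0))$ through the fibre product, with $\ell$ the path-lifting map, and gets equivariance from the fact that deck transformations send lifts to lifts. The only difference is that the paper simply asserts continuity of $\ell$, which you additionally sketch; your final condition $\alpha'(t_i)\in p(\widetilde U_{i-1}\cap\widetilde U_i)$ is enough for the sheet-to-sheet induction, with no component restriction needed since $p$ is injective on $\widetilde U_{i-1}$.
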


	\begin{proof}
		Let $g:\pix\to \xtoi$ be a choice map for $X$. Consider the fiber product $$\Pi(X)\underset{p}{\times}Y=\{([\gamma],y) \in \pix\times Y|\ p(y)=\gamma(0)\}.$$
		Note that there is a continuous map  $\ell:\xtoi\underset{p}{\times}Y\to \ytoi$  defined as
		$\ell(\alpha,y)=\widetilde{\alpha}$, where $\widetilde{\alpha}$ is the unique path in $Y$ such that $p\widetilde{\alpha}=\alpha$ and
		$\widetilde{\alpha}(0)=y$. Now consider the diagram
		\[
	\begin{tikzpicture}[>=latex,scale=0.9,transform shape]

		\node (A) at (0,0) {$\Pi(Y)$};
		
		\node (B) at (4.2,0)
		{$\Pi(X)\underset{p}{\times}Y$};
		
		\node (C) at (8.8,0)
		{$\xtoi\underset{p}{\times}Y$};
		
		\node (D) at (12.2,0)
		{$\ytoi$};
		
		\draw[->] (A) -- node[above] {$h$} (B);
		
		\draw[->] (B) -- node[above] {$g\times 1$} (C);
		
		\draw[->] (C) -- node[above] {$\ell$} (D);
		
		\draw[dashed,->]
		(A) to[bend right=18] node[below] {$g'$} (D);
		
	\end{tikzpicture}
	\]
		where $h([\gamma])=([p\gamma],\,\gamma(0))$. Let $g'$ be the composition $\ell\circ (g\times 1)\circ h$. Note that $g'$ is continuous and it is equivariant since, for any path $\beta$ in $X$ and any $\varphi\in Deck(Y/X)$, if $\widetilde{\beta}$ is the lifting of $\beta$ to a path in $Y$ starting at a point $y\in Y$, then $\varphi\widetilde{\beta}$ is the lifting of $\beta$ starting at $\varphi(y)$.
		\end{proof}

For regular coverings, the converse of the last proposition holds.

\begin{proposition}\label{regular}
	Let $p:Y\to X$ be a regular covering and let $G=Deck(Y/X)=Deck(\Pi(Y)/\Pi(X))$. If $Y$ admits a $G$-equivariant choice map $g':\Pi(Y)\to\ytoi$, then there exists a choice map $g:\Pi(X)\to\xtoi$ such that the following diagram commutes.
\[
\begin{tikzpicture}[>=stealth,baseline=(current bounding box.center)]
	
	\node (A) at (0,2) {$\Pi(Y)$};
	\node (B) at (3.2,2) {$\ytoi$};
	\node (C) at (0,0) {$\Pi(X)$};
	\node (D) at (3.2,0) {$\xtoi$};
	
	\draw[->] (A) -- node[above] {$g'$} (B);
	\draw[->] (A) -- node[left] {$p_*$} (C);
	\draw[->] (B) -- node[right] {$\tilde p$} (D);
	\draw[->] (C) -- node[below] {$g$} (D);
	
	\node at (1.6,1) {$\equiv$};
	
\end{tikzpicture},
\]
	where $\tilde p(\gamma)=p\gamma$.
	\end{proposition}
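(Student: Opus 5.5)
Since $p_*:\Pi(Y)\to\Pi(X)$ is a covering map (Theorem \ref{teocov}), we define $g$ by descending $\tilde p\circ g'$ along $p_*$. Concretely, for $[\gamma]\in\Pi(X)$ we choose any lift $[\beta]\in\Pi(Y)$ with $p_*[\beta]=[\gamma]$, i.e.\ $\beta$ a lift of $\gamma$ starting at some $y\in p^{-1}(\gamma(0))$, and we set $g([\gamma])=p\circ g'([\beta])$. The commutativity of the diagram then holds by construction, and the plan has three steps: well-definedness, the section property, and continuity.

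\emph{Well-definedness.} Since the covering is regular, $G$ acts transitively on each fiber $p^{-1}(\gamma(0))$. By uniqueness of path lifting, any two lifts $\beta,\beta'$ of $\gamma$ satisfy $\beta'=\varphi\beta$ for some $\varphi\in G$, and hence $p_*^{-1}([\gamma])=\{\varphi[\beta]:\varphi\in G\}$. By $G$-equivariance of $g'$ we get $g'(\varphi[\beta])=\varphi\, g'([\beta])$, and therefore
\[
p\circ g'(\varphi[\beta])=p\varphi\circ g'([\beta])=p\circ g'([\beta]).
\]
So $g$ does not depend on the chosen lift.

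\emph{Section property.} We have $[g'([\beta])]=[\beta]$ in $\Pi(Y)$, and applying the functor $p_*$ gives
\[
[p\,g'([\beta])]=p_*[\beta]=[\gamma].
\]
Hence $q\circ g=\mathrm{id}$.

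\emph{Continuity.} This is the only step with any content, and it is where care is needed. Since $p_*$ is a covering map, every $[\gamma]$ has an evenly covered neighborhood $W$, for instance $S(U,V,[\gamma])$ as in the proof of Theorem \ref{teocov}. On $W$ we have
\[
g|_W=\tilde p\circ g'\circ (p_*|_{W_\alpha})^{-1},
\]
where $W_\alpha$ is any one sheet over $W$ and $(p_*|_{W_\alpha})^{-1}$ is a continuous local inverse. The map $\tilde p:\ytoi\to\xtoi$ is continuous for the compact-open topology, since postcomposition with a continuous map is continuous. Thus $g$ is continuous on each such $W$, hence globally. Alternatively, $p_*$ is a surjective local homeomorphism and therefore a quotient map, and $\tilde p\circ g'$ is constant on its fibers, so $g$ is continuous for that reason as well.
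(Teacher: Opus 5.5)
Your proof is correct and follows the paper's argument: you define $g$ through an arbitrary lift, get well-definedness from regularity together with the $G$-equivariance of $g'$, and get continuity from the fact that $p_*$ is a covering map; you also check the section property, which the paper leaves implicit. The one step you state quickly is that every class in $p_*^{-1}([\gamma])$ is represented by a lift of $\gamma$ itself. This follows from homotopy lifting, exactly as in the paper's verification that $[\tilde\gamma']=[\varphi\tilde\gamma]$.
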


\begin{proof}
	
	Given $[\gamma]\in\Pi(X)$, define $g([\gamma])=\tilde p(g'[\widetilde{\gamma}])$ for any lifting $\widetilde{\gamma}$ of $\gamma$ (starting at any point in the fiber of $\gamma(0)$). We show that this is well defined. Suppose $[\gamma]=[\gamma']$ and let $\tilde\gamma$ and $\tilde\gamma'$ be liftings of $\gamma$ and $\gamma'$ with $\tilde\gamma(0)=y$ and $\tilde\gamma'(0)=y'$. Since $g'$ is equivariant, we only have to check that $[\tilde\gamma']=[\varphi\tilde\gamma]$ for some $\varphi\in Deck(Y/X)$. Now, since $p:Y\to X$ is a regular covering, there exists  $\varphi\in Deck(Y/X)$ with $\varphi(y)=y'$. This implies that  $[\tilde\gamma']=[\varphi\tilde\gamma]$.
	
 The continuity of $g$ follows from the fact that $p_*:\Pi(Y)\to\pix$ is a covering map.
\end{proof}

\begin{thm}\label{asphericalcw}
	Let $X$ be a locally finite or countable aspherical CW-complex, then $X$ has the dpp.
\end{thm}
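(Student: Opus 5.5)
The plan is to reduce to the universal cover and then descend via Proposition \ref{regular}. Let $p:\tx\to X$ be the universal cover and $G=Deck(\tx/X)\cong\pi_1(X)$. Since $p$ is regular (indeed universal), Proposition \ref{regular} says it suffices to construct a $G$-equivariant choice map $g':\Pi(\tx)\to\txtoi$. Because $\tx$ is simply connected, the map $s\times t:\Pi(\tx)\to\tx\times\tx$ is a homeomorphism, so we need a continuous map
\[
g'':\tx\times\tx\to\txtoi,\qquad g''(x,y)(0)=x,\quad g''(x,y)(1)=y,
\]
equivariant for the diagonal action of $G$ on $\tx\times\tx$ and the action of $G$ on $\txtoi$ by postcomposition. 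Any two paths with the same endpoints are homotopic rel endpoints in $\tx$, so such a $g''$ automatically satisfies $[g''(x,y)]=$ the unique class from $x$ to $y$. That is, $g''$ gives a choice map.

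I would construct $g''$ as an equivariant section of the endpoint evaluation $ev=(ev_0,ev_1):\txtoi\to\tx\times\tx$. This map is a Hurewicz fibration, and it is $G$-equivariant. Its fiber over $(x,y)$ is the space of paths from $x$ to $y$, which is homotopy equivalent to $\Omega\tx$. Now $X$ is aspherical, so $\tx$ is a simply connected CW-complex with trivial homotopy groups, hence contractible by Whitehead. Thus $\Omega\tx$ is contractible and all fibers of $ev$ are contractible. By the proposition on CW structures, $\tx\times\tx$ is a free $G$-CW-complex in the locally finite or countable case; this is where that hypothesis is used, to ensure the product topology is a CW topology.

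The key step is equivariant obstruction theory, carried out cell-orbit by cell-orbit. Choose one cell from each $G$-orbit of cells of $\tx\times\tx$, and extend the section over that cell. This is possible because the fiber is contractible: the fibration restricted to a cell is trivializable, and a map from $\partial D^n$ into a contractible fiber extends over $D^n$. Then translate the extension by $G$ to the rest of the orbit. Freeness of the action guarantees this is well defined, since distinct translates of an open cell are disjoint and the equivariant attaching maps are compatible. Proceeding by induction on skeleta gives a continuous equivariant section, since a map out of a CW-complex is continuous when it is continuous on each closed cell.

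Equivalently, one could quotient by $G$. Then $ev$ descends to a fibration $\txtoi\times_G(\tx\times\tx)\to(\tx\times\tx)/G=\pix$ with contractible fibers, and one takes an ordinary section over the CW-complex $\pix$. I expect the main obstacle to be verifying that $ev$ (or its quotient) is a genuine fibration in the compact-open topology, so that cellwise extension is justified. I would handle this with the standard fact that evaluation $Z^{\mathrm I}\to Z\times Z$ is a Hurewicz fibration for any space $Z$, together with local triviality over cells, which follows from the homotopy lifting property over the contractible disks. Finally, applying Proposition \ref{regular} to $g'=g''\circ(s\times t)$ yields the choice map $g:\pix\to\xtoi$, so $X$ has the dpp.
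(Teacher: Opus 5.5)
Your proposal is correct and follows the paper's own route: reduce via Proposition \ref{regular} to a $G$-equivariant section of the endpoint fibration $\txtoi\to\tx\times\tx$ over the free $G$-CW-complex $\tx\times\tx$, and build that section orbit-by-orbit on cells using contractibility of the fibers; this is exactly the ``equivariant lifting result'' the paper invokes without giving details. One small wording point: a Hurewicz fibration pulled back to a disk is only fiber-homotopy trivial, not literally trivializable, but fiber-homotopy triviality is all the cellwise extension step needs.
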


\begin{proof}
Let $\tx$ be the universal cover of $X$ and $G=Deck(\tx/X)$. By Proposition \ref{regular}, it suffices to prove that $\tx$ admits a  $G$-equivariant choice map $g':\Pi(\tx)\to\txtoi$.

Note that $\tx$ inherits a CW-structure from $X$. Moreover it is a  $G$-complex in the sense of \cite[Chapter II]{td}. 
Since $X$ is locally finite or countable, then $\tx\times \tx$ is also a $G$-complex with the diagonal action. Note that $\Pi(\tx)$ is $G$-homeomorphic to $\tx\times \tx$ via $\varphi([\gamma])=(\gamma(0),\gamma(1))$.

Then it suffices to find a $G$-equivariant map $\tilde g:\tx\times \tx \to\txtoi$ that is right inverse to the map $q:\txtoi\to\tx\times\tx$, $q(\gamma)=(\gamma(0),\gamma(1))$. We use an equivariant version of a classical lifting result for CW-complexes (see also \cite{td}). Since $q$ is a fibration and a homotopy equivalence, there is a lifting $\tilde g$. And since $q$ is a $G$-map and $\tx\times\tx$ is a $G$-complex, we can get an equivariant lifting $\tilde g$.

\[
\begin{tikzpicture}[scale=0.50,
	node distance=4cm,
	every node/.style={font=\large}
	]
	
	\node (A) at (0,0)   {$\widetilde{X} \times \widetilde{X}$};
	\node (B) at (6,0)   {$\widetilde{X} \times \widetilde{X}$};
	\node (C) at (6,4)   {$\txtoi$};
	
	\draw[->] (A) -- (B) node[midway, below] {$\mathrm{id}$};
	
	\draw[->] (C) -- (B) node[midway, right] {$q$};
	
	\draw[->, dashed] (A) -- (C) node[midway, above left] {$\exists\,\tilde{g}$};
	
	\node at (3.8, 1.5) {$\equiv$};
	
\end{tikzpicture}
\]
\end{proof}	

\begin{corollary}\label{cwdpp}
	Let $X$ be a locally finite or countable CW-complex, then it is aspherical if and only if it has the dpp.
	\end{corollary}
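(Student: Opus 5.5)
The corollary combines two results already in the paper, so the plan is to prove each implication by citing the appropriate one.

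\emph{Forward direction.} Assume $X$ is a locally finite or countable CW-complex that is aspherical. This is exactly the hypothesis of Theorem \ref{asphericalcw}, which gives the dpp directly. For completeness, I recall the chain of reductions behind that theorem, since it is where the finiteness or countability hypothesis is used:
\begin{itemize}
\item By Proposition \ref{regular} applied to the universal covering $p:\tx\to X$ (a regular covering with $G=Deck(\tx/X)$), it suffices to produce a $G$-equivariant choice map $\Pi(\tx)\to\txtoi$.
\item We identify $\Pi(\tx)\cong\tx\times\tx$ equivariantly via $[\gamma]\mapsto(\gamma(0),\gamma(1))$.
\item Since $X$ is aspherical, $\tx$ is contractible, so the endpoint fibration $\txtoi\to\tx\times\tx$ is a homotopy equivalence.
\item We then find an equivariant section by equivariant obstruction theory, cell by cell on the free $G$-CW-complex $\tx\times\tx$. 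This step needs $\tx\times\tx$ with the product topology to be a CW-complex, which is precisely what the locally finite or countable hypothesis guarantees.
\end{itemize}

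\emph{Converse direction.} Assume $X$ has the dpp. Theorem \ref{ch} gives that $\tx$ is contractible, hence $X$ is aspherical. This implication needs neither the CW structure nor any finiteness assumption: it applies to any good space, and every CW-complex is good.

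There is no real obstacle, since the whole content lies in the two cited theorems. The only point to check is that a CW-complex is good, so that $\Pi(X)$ and the dpp make sense; this was noted at the start of Section \ref{pix}.
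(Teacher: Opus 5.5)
Your proposal is correct and matches the paper's proof: the forward direction is Theorem~\ref{asphericalcw} and the converse is Theorem~\ref{ch}. Your recap of the argument behind Theorem~\ref{asphericalcw} also follows the paper's proof of that theorem, namely Proposition~\ref{regular}, the identification $\Pi(\tx)\cong\tx\times\tx$, and the equivariant lifting over the $G$-complex $\tx\times\tx$.
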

	\begin{proof}
This follows from Theorem \ref{ch} and Theorem \ref{asphericalcw}.
\end{proof}

 Using Corollary \ref{cwdpp} we can reinterpret asphercity of (locally finite or countable) CW-complexes in terms of the existence of a certain section (after replacing $X$ by the CW-complex $\pix$).

\begin{remark}
If $f,g:X\to Y$ are homotopic, then so are $f_*,g_*:\pix\to\Pi(Y)$. A homotopy $H:X\times I\to Y$ from $f$ to $g$ induces a homotopy $\hat H:\pix\times I\to \Pi(Y)$ from $f_*$ to $g_*$, defined by $\hat H([\gamma],t)=[\varphi_t]$, with $\varphi_t(s)=H(\gamma(s),t)$. As a consequence, a homotopy equivalence $f:X\to Y$ induces a homotopy equivalence $f_*:\pix\to\Pi(Y)$. 
\end{remark}

It is  natural to ask whether the dpp is invariant under homotopy equivalences. By the previous remark, if $X\simeq Y$, then $\pix\simeq \Pi(Y)$. If we compose a choice map (section) $g_X:\pix\to \xtoi$ with a homotopy equivalence, we obtain a homotopy section $g'_Y:\Pi(Y)\to\ytoi$. We show next that the map $\ytoi\to\Pi(Y)$ is a fibration and, as a consequence, we can replace the homotopy section $g'_Y$ by a choice map (an actual section) $g_Y:\Pi(Y)\to\ytoi$ using the homotopy lifting property.

\begin{lemma}\label{gfib}
Let $G$ be a group and let $E$ and $B$ be $G$-spaces such that the action of $G$ on both spaces is properly discontinuous. If $p:E\to B$ is a $G$-fibration, then the induced map $\overline p:E/G\to B/G$ is a fibration.  
\end{lemma}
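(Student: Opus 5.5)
We have to check the homotopy lifting property for $\overline p:E/G\to B/G$. Fix a space $Z$, a homotopy $H:Z\times I\to B/G$ and a map $f:Z\to E/G$ with $\overline p f=H_0$. Write $\pi_E:E\to E/G$ and $\pi_B:B\to B/G$ for the quotient maps. Because the actions are properly discontinuous, both are covering maps with deck group $G$. Our approach is to lift the problem to $E$ and $B$ through these coverings, use the equivariant homotopy lifting property of $p$ there, and push the answer back down. To make this work for an arbitrary $Z$, we pull the coverings back over $Z$ instead of lifting maps out of $Z$ directly; lifting $f$ itself would require conditions on $\pi_1(Z)$ that we do not have.

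\textbf{Step 1: pull back over $Z$.} Let $\widehat Z=f^*E=\{(z,e)\in Z\times E:\ \pi_E(e)=f(z)\}$. This is a principal $G$-covering of $Z$, with $G$ acting on the $E$-coordinate, and the projection $\hat f:\widehat Z\to E$ is $G$-equivariant. Since $\pi_B$ is a covering, the homotopy lifting property for coverings (which holds for arbitrary spaces) lifts $H\circ(\mathrm{pr}\times\mathrm{id}):\widehat Z\times I\to B/G$ to a unique map $\widehat H:\widehat Z\times I\to B$ with $\widehat H_0=p\hat f$. By uniqueness of lifts in coverings, $\widehat H$ is $G$-equivariant: both $\widehat H(g\,\cdot,\cdot)$ and $g\widehat H$ lift the same map and agree at time $0$, using that $p\hat f$ is equivariant.

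\textbf{Step 2: apply the $G$-fibration property.} We now have a $G$-space $\widehat Z$, an equivariant map $\hat f$ and an equivariant homotopy $\widehat H$ with $p\hat f=\widehat H_0$. Since $p$ is a $G$-fibration, there is an equivariant lift $\widetilde H:\widehat Z\times I\to E$ with $\widetilde H_0=\hat f$ and $p\widetilde H=\widehat H$. Here we interpret ``$G$-fibration'' as the homotopy lifting property with respect to $G$-spaces and $G$-maps; for the application in the paper this is what one checks for $\ytoi\to\Pi(Y)$ with the deck group action. If only lifting for spaces of the form $G\times_H Z$ is available, one can reduce to it, since $\widehat Z$ is locally of the form $G\times U$.

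\textbf{Step 3: descend.} The composite $\pi_E\widetilde H:\widehat Z\times I\to E/G$ is $G$-invariant. Because $\widehat Z\times I\to Z\times I$ is a covering, hence a quotient map, it descends to a continuous map $K:Z\times I\to E/G$. Then $K_0=f$, since $\pi_E\hat f(z,e)=\pi_E(e)=f(z)$. Also $\overline pK=H$, since $\overline p\pi_E\widetilde H=\pi_B p\widetilde H=\pi_B\widehat H$, and this equals $H$ composed with the projection. So $K$ is the required lift and $\overline p$ is a fibration.

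\textbf{Main obstacle.} We expect the delicate point to be Step 2: matching the precise definition of $G$-fibration to the free $G$-space $\widehat Z$, which need not be of the form $G\times Z$. If the intended definition is lifting against all $G$-spaces, Step 2 is immediate. Otherwise we would use that $\widehat Z\times I\to Z\times I$ is a principal $G$-bundle and the standard local-to-global argument for fibrations (Hurewicz uniformization), which is technically heavier. Step 1 also needs care: we must check that equivariance of $\widehat H$ really follows from uniqueness of lifts when $\widehat Z$ is not connected. It does, because uniqueness is applied on each slice $\{x\}\times I$.
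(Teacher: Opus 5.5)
Your proof is correct, and it takes a somewhat different route from the paper's. The paper gives no argument of its own. It defers to the proof of Gevorgyan's Theorem~5, which obtains the fibration property of $\overline p$ from a map $\lambda^*$ defined on the orbit spaces (a lifting function built from the equivariant lifting structure of $p$). The paper only adds that compactness of $G$ is not used, and that $\lambda^*$ is well defined thanks to unique path lifting in the covering $B\to B/G$.

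You instead verify the homotopy lifting property directly against an arbitrary space $Z$. Pulling back the principal $G$-covering $E\to E/G$ along $f$ handles all choices of a point over $f(z)$ at once. The paper's well-definedness issue then becomes $G$-equivariance of $\widehat H$, which you correctly get from uniqueness of lifts in $B\to B/G$, slice by slice. Continuity of the final lift becomes descent along the surjective covering (hence quotient) map $\widehat Z\times I\to Z\times I$.

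If you run your argument on the test space $\{(\overline e,\overline\omega)\in E/G\times (B/G)^{I}:\ \overline p(\overline e)=\overline\omega(0)\}$ with its tautological data, you essentially recover the orbit-level lifting function, so the two proofs share the same core. What yours gains is self-containedness: it needs neither the lifting-function characterization of fibrations nor the claim that an external argument ``applies verbatim'', and it makes continuity of the descended lift explicit rather than leaving it to the reference.

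Finally, your Step~2 needs no fallback. The standard definition of $G$-fibration (equivariant homotopy lifting against all $G$-spaces, as in Colman--Grant) is the one relevant to the paper's application to $\widetilde X^{I}\to\widetilde X\times\widetilde X$. Under it, the free $G$-space $\widehat Z$ is an admissible test space, so the Hurewicz-uniformization detour is unnecessary.
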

\begin{proof}
This follows from the proof of \cite[Theorem 5]{gevo}. The argument applies verbatim. Note that the compactness of $G$ in the statement of  \cite[Theorem 5]{gevo} is not used, and the good definition of the map $\lambda^*$ in the proof follows, in our case, for the unique path lifting of the covering $B\to B/G$.
\end{proof}
\begin{proposition}
The map $q:\xtoi\to\pix$ is a fibration.
\end{proposition}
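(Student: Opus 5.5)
The natural route is through Lemma \ref{gfib}, applied to the universal cover. Let $p:\tx\to X$ be the universal cover and $G=Deck(\tx/X)$. We have the map $\tilde q:\txtoi\to\Pi(\tx)$, $\tilde q(\gamma)=[\gamma]$. Since $\tx$ is simply connected, the Remark before the proposition on contractible spaces identifies $\Pi(\tx)$ with $\tx\times\tx$ via $[\gamma]\mapsto(\gamma(0),\gamma(1))$. Under this identification $\tilde q$ is the endpoint evaluation $\txtoi\to\tx\times\tx$, $\gamma\mapsto(\gamma(0),\gamma(1))$. This map is a Hurewicz fibration: it is the map induced by restriction along the cofibration $\{0,1\}\hookrightarrow I$, and the standard exponential-law argument gives the homotopy lifting property.

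\textbf{Step 1: $\tilde q$ is a $G$-fibration.} Here $G$ acts on $\txtoi$ by post-composition and on $\tx\times\tx$ diagonally, and $\tilde q$ is clearly equivariant. To lift an equivariant homotopy equivariantly, I would use the explicit lifting function for the endpoint fibration. Given a path $\gamma$ and paths $\alpha,\beta$ for the two endpoints, one reparametrizes $\bar\alpha_{[0,s]}*\gamma*\beta_{[0,s]}$ continuously in $s$. This formula commutes with post-composition by any homeomorphism of $\tx$, so the resulting lift is automatically equivariant.

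\textbf{Step 2: the actions are properly discontinuous.} On $\tx\times\tx$ this holds because it already holds on each factor. On $\txtoi$, take an evenly covered open set $U\ni\gamma(0)$ in $X$, with $p^{-1}(U)=\coprod_{g\in G} gU_0$ and $\gamma(0)\in U_0$. The set $\{\delta:\delta(0)\in U_0\}$ is open in the compact-open topology, and its $G$-translates are pairwise disjoint.

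\textbf{Step 3: identify the quotients.} I need homeomorphisms $\txtoi/G\cong\xtoi$, induced by $\gamma\mapsto p\gamma$, and $\Pi(\tx)/G\cong\pix$, induced by $p_*$, under which $\bar{\tilde q}$ becomes $q$.

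For the second, Theorem \ref{teocov} shows that $p_*$ is a covering with deck group $G$. It is a regular covering, so $\Pi(\tx)/G\cong\pix$.

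For the first, the map $\txtoi/G\to\xtoi$ is a continuous bijection, by unique path lifting and regularity of $p$. Openness comes from the continuity of the lifting map $\ell:\xtoi\times_p\tx\to\txtoi$ used in Proposition \ref{dppcov}. Locally, pick a continuous choice of lift of the starting point over an evenly covered $U$. Then $\alpha\mapsto\ell(\alpha,(p|_{U_0})^{-1}\alpha(0))$ is a local continuous inverse on $\{\alpha:\alpha(0)\in U\}$, which is open.

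Lemma \ref{gfib} then gives that $q$ is a fibration. I expect Step 3, in particular the compact-open topology check that $\txtoi\to\xtoi$ is a quotient by $G$, to be the main obstacle. The equivariance in Step 1 should be routine once the explicit lifting formula is written down.
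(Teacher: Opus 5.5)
Your proposal is correct and is essentially the paper's proof: apply Lemma \ref{gfib} to the $G$-equivariant endpoint fibration $\txtoi\to\tx\times\tx\cong\Pi(\tx)$, then identify the orbit spaces with $\xtoi$ and $\pix$. The paper only asserts the $G$-fibration property and the identifications $\xtoi=\txtoi/G$ and $\pix=\Pi(\tx)/G$, and your explicit checks (equivariant lifting via a formula natural in post-composition, proper discontinuity, and the local inverse built from $\ell$) usefully fill in those details; in Step 1, just make sure the lift equals $\gamma$ exactly at $s=0$, for instance by using a retraction $I\times I\to I\times\{0\}\cup\{0,1\}\times I$.
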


\begin{proof}
Consider the universal cover $\tx$. Note that the map $p:\txtoi\to\tx\times\tx$, $p(\gamma)=(\gamma(0),\gamma(1))$ is a $G$-fibration where $G=Deck(\tx/X)$ (see, for example, \cite{CG,grant,td}). On the other hand, $\xtoi=(\tx/G)^{\rm I}=\txtoi/G$ since $\tx\to X=\tx/G$ is a covering. Moreover, via the identification $\Pi(\tx)=\tx\times\tx$,  $\pix=\Pi(\tx)/G=(\tx\times\tx)/G$. Now the result follows from Lemma \ref{gfib}.
\end{proof}
\begin{corollary}
Let $X$ and $Y$ be homotopy equivalent spaces, then $X$ has the dpp if and only if $Y$ has the dpp.
\end{corollary}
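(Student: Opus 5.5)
By symmetry it suffices to show that if $X$ has the dpp and $f:X\to Y$ is a homotopy equivalence, then $Y$ has the dpp. We follow the strategy announced before Lemma \ref{gfib}. First we produce a homotopy section of $q_Y:\ytoi\to\Pi(Y)$, and then we straighten it into an actual section using the fact that $q_Y$ is a fibration.

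\textbf{Step 1: a homotopy section.} Let $h:Y\to X$ be a homotopy inverse of $f$, and let $g_X:\pix\to\xtoi$ be a choice map. Let $f^{\mathrm I}:\xtoi\to\ytoi$ denote post-composition with $f$, which is continuous in the compact-open topology. Define
$$g'_Y=f^{\mathrm I}\circ g_X\circ h_*:\Pi(Y)\to\ytoi.$$
Then $q_Y\circ g'_Y=q_Y\circ f^{\mathrm I}\circ g_X\circ h_*$. The naturality identity $q_Y\circ f^{\mathrm I}=f_*\circ q_X$ holds because $[f\gamma]=f_*[\gamma]$. Since $q_X g_X=\mathrm{id}$, we get $q_Y g'_Y=f_*h_*=(fh)_*$. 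By the Remark preceding Lemma \ref{gfib}, a homotopy $fh\simeq \mathrm{id}_Y$ induces a homotopy $(fh)_*\simeq \mathrm{id}_{\Pi(Y)}$, explicitly $\hat H([\gamma],t)=[H(\gamma(\cdot),t)]$. Hence $q_Y g'_Y\simeq \mathrm{id}_{\Pi(Y)}$ via $\hat H$, with $\hat H_0=q_Yg'_Y$ and $\hat H_1=\mathrm{id}$.

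\textbf{Step 2: straightening.} By the last Proposition, $q_Y:\ytoi\to\Pi(Y)$ is a (Hurewicz) fibration. We apply the homotopy lifting property to the homotopy $\hat H:\Pi(Y)\times I\to\Pi(Y)$ with initial lift $g'_Y$. This gives $\tilde H:\Pi(Y)\times I\to\ytoi$ with $\tilde H_0=g'_Y$ and $q_Y\tilde H=\hat H$. Setting $g_Y=\tilde H_1$, we get $q_Y g_Y=\hat H_1=\mathrm{id}_{\Pi(Y)}$. So $g_Y$ is continuous and is a right inverse of $q_Y$, that is, a choice map for $Y$. The converse follows by exchanging the roles of $X$ and $Y$.

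\textbf{Main obstacle.} The real content lies in the fibration property of $q_Y$, which is already established via Lemma \ref{gfib}. The remaining care is in Step 1, where one must check that $\hat H$ is continuous as a map $\Pi(Y)\times I\to\Pi(Y)$. It factors through the quotient $q_Y\times\mathrm{id}_I$ as the composite $\ytoi\times I\to\ytoi\to\Pi(Y)$, $(\gamma,t)\mapsto H(\gamma(\cdot),t)$. That $q_Y\times \mathrm{id}_I$ is a quotient map follows since $I$ is locally compact Hausdorff. Continuity of $\hat H$ then follows from continuity of the composite, which holds by the exponential law.
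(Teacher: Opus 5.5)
Your proposal is correct and follows the same route as the paper. You build a homotopy section $f^{\mathrm I}\circ g_X\circ h_*$ of $q_Y$ from a choice map for $X$, then use that $q_Y$ is a fibration (Lemma \ref{gfib} and the Proposition after it) to straighten it into an actual section by homotopy lifting; your explicit check that $\hat H$ is continuous, via the quotient map $q_Y\times\mathrm{id}_I$ and the exponential law, fills in a detail the paper leaves implicit.
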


In section \ref{ac} we will generalize this result and relate if to the notion of equivariant topological complexity introduced by Colman and Grant (see \cite{CG,grant}).

\section{Aspherical subcomplexes and convexity}\label{convexity}

It is well-known that if $X$ is an aspherical CW-complex of dimension $2$ and $A\subseteq X$ is a connected subcomplex such that the inclusion $i:A\to X$ is $\pi_1$-injective (i.e. $i_*:\pi_1(A)\to\pi_1(X)$ is injective), then $A$ is also aspherical. This follows from the fact that, for any connected $2$-complex $Y$, $Y$ is aspherical if and only if the second homology group $H_2(\widetilde Y)$ of its universal cover is trivial. For higher dimensions this is not longer true. For example, the inclusion $S^2\subset D^3$ of the $2$-sphere in the $3$-disc induces an isomorphism in $\pi_1$ but $S^2$ is not aspherical. This is because $S^2$ is not a convex subcomplex of the disc.

\begin{remark}
Let $X$ be a locally finite or countable CW-complex and $A\subseteq X$ a subcomplex. If the inclusion $i:A\to X$ is $\pi_1$-injective then $\Pi(A)\subseteq \Pi(X)$ is a subcomplex. 
\end{remark}

\begin{definition}
Let $X$ be a locally finite or countable CW-complex and $A\subseteq X$ a subcomplex such that the inclusion $i:A\to X$ is $\pi_1$-injective. If $X$ is aspherical, by Theorem \ref{asphericalcw} there is a choice map $g:\Pi(X)\to \xtoi$. We say that $A$ is a convex subcomplex (with respect to $g$) if the image of the restriction of $g$ to $\Pi(A)$ lies in $\atoi$. 
\end{definition}

\begin{remark}
Note that if $A$ is a convex subcomplex for some choice map  $g:\Pi(X)\to \xtoi$, then $A$ is aspherical. 
\end{remark}

\begin{thm}
	Let $X$ be a locally finite or countable aspherical CW-complex and let $A\subset X$ be a subcomplex such that $i_*:\pi_1(A)\to\pi_1(X)$ is an isomorphism. Then $A$ is aspherical if and only if it is convex for some choice map $g:\Pi(X)\to \xtoi$.
\end{thm}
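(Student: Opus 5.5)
The implication ``convex $\Rightarrow$ aspherical'' is the preceding remark. If $g(\Pi(A))\subseteq \atoi$, then $g|_{\Pi(A)}:\Pi(A)\to\atoi$ is a choice map for $A$. For this we use that $\Pi(A)\subseteq\Pi(X)$ carries the subspace topology, which holds since it is a subcomplex by the earlier remark. So $A$ has the dpp, and Theorem \ref{ch} gives asphericity. The real content is the converse: given $A$ aspherical, we must construct a choice map $g$ for $X$ with $g(\Pi(A))\subseteq\atoi$.

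The plan mirrors the proof of Theorem \ref{asphericalcw}, but in relative form. Let $p:\tx\to X$ be the universal cover and $G=Deck(\tx/X)$. Since $i_*$ is an isomorphism on $\pi_1$, the preimage $\ta=p^{-1}(A)$ is connected and simply connected, and it is the universal cover of $A$ with the same group $G$. It is a $G$-subcomplex of $\tx$. Under the identification $\Pi(\tx)=\tx\times\tx$, we get $\Pi(\ta)=\ta\times\ta$, and $\ta\times\ta$ is a $G$-subcomplex of the $G$-complex $\tx\times\tx$ with the diagonal action. By Proposition \ref{regular}, it suffices to produce a $G$-equivariant section $\tilde g$ of $q:\txtoi\to\tx\times\tx$ with $\tilde g(\ta\times\ta)\subseteq\tatoi$. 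The induced $g$ then sends $\Pi(A)=p_*(\Pi(\ta))$ into $\tilde p(\tatoi)\subseteq\atoi$.

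The construction has two steps. First, since $A$ is aspherical and locally finite or countable, Theorem \ref{asphericalcw} applied to $A$ gives, through its proof, a $G$-equivariant section $\tilde g_A:\ta\times\ta\to\tatoi$ of $q_A:\tatoi\to\ta\times\ta$. Second, we extend $\tilde g_A$ over the remaining equivariant cells of $\tx\times\tx$. This uses the equivariant relative lifting property for the $G$-fibration $q:\txtoi\to\tx\times\tx$. We have the square with $\ta\times\ta\to\txtoi$ given by $\tilde g_A$ on top and the identity on the bottom. The left map is the $G$-cofibration $\ta\times\ta\hookrightarrow\tx\times\tx$, and the right map is $q$.

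The main obstacle is justifying this relative equivariant lift. I would argue cell by cell over free $G$-cells $G\times D^n$ of $(\tx\times\tx)\setminus(\ta\times\ta)$. On such a cell, equivariant extension reduces to a non-equivariant extension over a single $D^n$ rel $\partial D^n$, which is then translated by $G$; this works because the action is free, as $G$ acts freely on $\tx$. The fiber of $q$ over $(x,y)$ is the space of paths from $x$ to $y$ in $\tx$, and it is weakly contractible because $\tx$ is contractible: $X$ is aspherical and $\tx$ is a CW-complex. Hence the obstruction to extending a section of the pulled-back fibration over $D^n$ from $\partial D^n$ lies in $\pi_{n-1}$ of a contractible fiber, and so vanishes. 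The extensions are continuous on $\tx\times\tx$ because of the weak topology, which we may use since $\tx\times\tx$ is a CW-complex in the locally finite or countable case. I expect the only delicate point to be this bookkeeping of the relative equivariant obstruction argument, in the spirit of \cite{td}.
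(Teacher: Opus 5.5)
Your proposal is correct and follows the paper's proof. You take an equivariant choice map on $\Pi(\ta)=\ta\times\ta$ and extend it $G$-equivariantly over $\tx\times\tx$, using the relative lifting property of the $G$-fibration $\txtoi\to\tx\times\tx$ along the $G$-subcomplex inclusion. You then descend via Proposition \ref{regular}. Your cell-by-cell argument over free $G$-cells, using the weakly contractible fibers, simply spells out the equivariant lifting step that the paper invokes abstractly (as ``$G$-fibration and $G$-homotopy equivalence'').
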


\begin{proof}
Suppose $A$ is aspherical. Let $G=Deck(\widetilde X/X)$. Pick a choice map $g_A:\Pi(A)\to\atoi$ and extend $g_A$ to a $G$-equivariant choice map $\tilde g_{A}:\Pi(\widetilde A)\to\tatoi$. Consider the following commutative diagram
\[
\begin{tikzpicture}[>=stealth, baseline=(current bounding box.center)]
	
	\node (A) at (0,2) {$\Pi(\widetilde{A})$};
	\node (B) at (3.5,2) {$\txtoi$};
	\node (C) at (0,0) {$\Pi(\widetilde{X})=\widetilde{X}\times\widetilde{X}$};
	\node (D) at (3.5,0) {$\widetilde{X}\times\widetilde{X}$};
	
	\draw[->] (A) -- node[above] {$\tilde i\circ \tilde{g}_{A}$} (B);
	\draw[->] (A) -- (C);
	\draw[->] (B) -- (D);
	\draw[->] (C) -- node[above] {$id$} (D);
	
	\node at (1.75,1) {$\equiv$};
	
\end{tikzpicture}
\]
Note that $\Pi(\widetilde A)\to\Pi(\tx)$ is a $G$-cofibration because $i_*:\pi_1(A)\to\pi_1(X)$ is an isomorphism. Then, since $\txtoi \to\tx\times\tx$ is $G$-fibration and a $G$-homotopy equivalence, there exists an equivariant map $\tilde g:\Pi(\tx)\to\txtoi$ such that the following diagram commutes.
\[
\begin{tikzpicture}[x=3cm, y=2.5cm, >=stealth]
	\node (TL) at (0, 1) {$\Pi(\widetilde{A})$};
	\node (TR) at (1, 1) {$\txtoi$};
	\node (BL) at (0, 0) {$\Pi(\tilde{X})$};
	\node (BR) at (1, 0) {$\tilde{X} \times \tilde{X}$};
	
	\draw[->] (TL) -- (TR) node[midway, above] {$\tilde{\imath} \circ \tilde{g}_A$};
	\draw[->] (TL) -- (BL);
	\draw[->] (BL) -- (BR) node[midway, above] {$id$};
	\draw[->] (TR) -- (BR);
	
	\draw[->, dashed] (BL) -- (TR) node[midway, above left] {$\tilde{g}$};

\end{tikzpicture}
\]
The equivariant map $\tilde g$ induces a choice map $g:\Pi(X)\to\xtoi$ that extends $g_A:\Pi(A)\to\atoi$.
\end{proof}

\section{Asphericity complexity}\label{ac}

In this section we introduce and investigate a new invariant, the asphericity complexity $AC(X)$ of a space $X$. This invariant measures how far the space $X$ is from being aspherical.
	
\begin{definition}
	Let $X$ be a (good) topological space. The (normalized) asphericity complexity of $X$ is the minimum interger $k$ such that $\pix$ can be covered by open subsets $U_0,\ldots,U_k$, each of them admitting a choice map (continuous section) $s_j:U_j\to \xtoi$, with $qs_j=i_j:U_j\hookrightarrow \pix$. Here $q:\xtoi\to\pix$ is the quotient map and $i_j$ denotes the inclusion. We denote $AC(X)=k$.
\end{definition}

\begin{remark}
	Note that $AC(X)=0$ if and only if $X$ has the dpp. In particular, for locally finite or countable CW-complexes $X$, $AC(X)=0$ if and only if $X$ is aspherical.
\end{remark}

\begin{remark}\label{acforsc}
	If $X$ is simply connected, $q:\xtoi\to \pix$ is, up to homeomorphism, the canonical fibration $p:\xtoi\to X\times X$. Therefore, in this case,  $AC(X)=TC(X)$, the (normalized) topological complexity of $X$ (see \cite{farber}).
	\end{remark}

The notion of asphericity complexity is related to that of equivariant topological complexity introduced by Colman and Grant \cite{CG} (see also \cite{grant}). Recall that the equivariant topological complexity $TC_G(X)$
of a $G$-space $X$ is the minimum interger $k$ such that $X\times X$ can be covered by invariant open subsets $U_0,\ldots,U_k$, each of them admitting a $G$-map  $s_j:U_j\to \xtoi$, with $ps_j=i_j:U_j\hookrightarrow X\times X$. Here $p:\xtoi\to X\times X$ is the cannonical $G$-fibration.

\begin{thm}\label{acvstcg}
	Let $X$ be a (good) space. Then $AC(X)=TC_G(\widetilde X)$, where $p:\tx\to X$ is the universal cover and $G=Deck(\tx/X)$.
	\end{thm}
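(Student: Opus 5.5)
We prove the two inequalities $AC(X)\le TC_G(\tx)$ and $TC_G(\tx)\le AC(X)$ by passing local sections back and forth along the covering $p_*:\Pi(\tx)\to\pix$. The tools are Theorem \ref{teocov}, the $G$-homeomorphism $\Pi(\tx)\cong\tx\times\tx$ with the diagonal action, the identification $\pix=(\tx\times\tx)/G$, and the identification $\xtoi=\txtoi/G$, where the map $\tilde p:\txtoi\to\xtoi$, $\gamma\mapsto p\gamma$, is the orbit map. Under these identifications the quotient $q:\xtoi\to\pix$ is the map induced on orbit spaces by the canonical $G$-fibration $\pi:\txtoi\to\tx\times\tx$. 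This is exactly the setting of the proof that $q$ is a fibration.

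\textbf{Step 1: $AC(X)\le TC_G(\tx)$.} Take invariant open sets $W_0,\dots,W_k$ covering $\tx\times\tx$, with $G$-maps $\sigma_j:W_j\to\txtoi$ satisfying $\pi\sigma_j=\mathrm{incl}$. Set $U_j=p_*(W_j)$; these are open because $p_*$ is a covering, hence an open map. Since $W_j$ is invariant, $W_j=p_*^{-1}(U_j)$. Define $s_j:U_j\to\xtoi$ by $s_j([\gamma])=\tilde p(\sigma_j(w))$ for any $w\in p_*^{-1}([\gamma])$. This is well defined because the fiber is a single $G$-orbit and $\sigma_j$ is equivariant, and it is continuous because $p_*$ is a covering map. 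This is the argument of Proposition \ref{regular}, restricted to the open set $U_j$. The relation $qs_j=\mathrm{incl}$ follows from $\pi\sigma_j=\mathrm{incl}$ together with $q\tilde p=p_*\pi$.

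\textbf{Step 2: $TC_G(\tx)\le AC(X)$.} Take $U_0,\dots,U_k$ covering $\pix$ with sections $s_j$. Set $W_j=p_*^{-1}(U_j)$, which is open and $G$-invariant. Define $\sigma_j:W_j\to\txtoi$ by $\sigma_j(w)=\ell(s_j(p_*w),\,w(0))$, lifting the path $s_j(p_*w)$ starting at the point $w(0)\in\tx$, exactly as in Proposition \ref{dppcov}. Here $\ell$ is the continuous lifting map on the fiber product $\xtoi\times_p\tx$. The map $\sigma_j$ is continuous as a composite, and it is equivariant because deck transformations carry lifts to lifts. It remains to check that $\pi\sigma_j(w)=w$. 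The lift $\widetilde{\alpha}$ of $\alpha=s_j(p_*w)$ starting at $w(0)$ satisfies $[\alpha]=[p w]$ in $\pix$. Since $\alpha$ and $pw$ are homotopic relative endpoints, their lifts from the common point $w(0)$ have the same endpoint, so $\widetilde{\alpha}(1)=w(1)$. Hence $\pi\sigma_j(w)=(w(0),w(1))=w$ under the identification $\Pi(\tx)=\tx\times\tx$.

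\textbf{Main obstacle.} The delicate point is the bookkeeping of the identifications rather than any single estimate. One has to make sure that $p_*$ really corresponds to the orbit map $\tx\times\tx\to(\tx\times\tx)/G$, and that $\pi$ and $q$ are compatible under $\tilde p$. One also has to verify continuity of $\ell$ on the fiber product, which holds by unique path lifting and the compact-open topology. With these facts in place, both directions reduce to the constructions already used in Propositions \ref{dppcov} and \ref{regular}, now applied on open subsets instead of globally.
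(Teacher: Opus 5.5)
Your proposal is correct and follows the paper's proof. Both arguments transport local sections along the covering $p_*:\Pi(\tx)=\tx\times\tx\to\pix$, lifting them to $p_*^{-1}(U_j)$ as in Proposition \ref{dppcov} and descending them to $p_*(\widetilde U_j)$ as in Proposition \ref{regular}. You also make explicit several checks the paper leaves implicit: openness of $p_*(W_j)$, the equality $W_j=p_*^{-1}(p_*(W_j))$ by invariance, well-definedness via transitivity of $G$ on fibres, and $\pi\sigma_j=\mathrm{incl}$ via lifting a homotopy rel endpoints.
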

	
	\begin{proof}
		 Consider the map $p_*:\Pi(\tx)=\tx\times \tx\to \pix$. Suppose first that $U_0,\ldots,U_k$ are open subsets of $\pix$ and  $s_j:U_j\to \xtoi$ are local sections. We proceed similarly as in the proof of Proposition \ref{dppcov}. For each $j$, take the $G$-invariant open subset $\widetilde{U_j}=p_*^{-1}(U_j)\subseteq \tx\times\tx$, and define a map $\tilde s_j:\widetilde{U_j}\to\txtoi$ as the composition
		 	\[
		 \begin{tikzpicture}[>=latex,scale=0.9,transform shape]

		 	\node (A) at (0,0) {$\widetilde{U_j}\vphantom{\underset{p}{\times}}$};
		 	
		 	\node (B) at (4.2,0)
		 	{$U_j\underset{p}{\times}\tx$};
		 	
		 	\node (C) at (8.8,0)
		 	{$\xtoi\underset{p}{\times}\tx$};
		 	
		 	\node (D) at (12.2,0)
		 	 {$\txtoi\vphantom{\underset{p}{\times}}$};
		 	
		 	\draw[->] (A) -- node[above] {$h$} (B);
		 	
		 	\draw[->] (B) -- node[above] {$s_j\times 1$} (C);
		 	
		 	\draw[->] (C) -- node[above] {$\ell$} (D);
		 	
		 	\draw[dashed,->]
		 	(A) to[bend right=18] node[below] {$\tilde s_j$} (D);
		 	
		 \end{tikzpicture}
		 \]
		 The maps $h$ and $l$ are defined similarly as in the proof of Proposition \ref{dppcov}. 
		 
		 Suppose now that $\widetilde U_0,\ldots,\widetilde U_k$ are open subsets of $\tx\times\tx$ and  $\tilde s_j:\widetilde U_j\to \txtoi$ are local sections. We proceed as in the proof of Proposition \ref{regular}. For each $j$, take the open subset $U_j=p_*(\widetilde U_j)\subseteq \pix$, and define a map  $s_j:U_j\to \xtoi$ similarly as in Proposition \ref{regular}.
		 \end{proof}
		 
Note that, by definition, $AC(X)=secat(q)$ the sectional category of the fibration $q:\xtoi\to\pix$ (see \cite{CLOT}). A homotopy equivalence $f:X\to Y$ induces a commutative diagram
\[
\begin{tikzpicture}[baseline=(current bounding box.center)]
    \node (XI) at (0,1.5) {$X^I$};
    \node (YI) at (3,1.5) {$Y^I$};
    \node (PX) at (0,0) {$\Pi(X)$};
    \node (PY) at (3,0) {$\Pi(Y)$};

    \draw[->] (XI) -- node[above] {$\hat f$} (YI);
    \draw[->] (PX) -- node[below] {$f_*$} (PY);
    \draw[->] (XI) -- node[left] {$q_X$} (PX);
    \draw[->] (YI) -- node[right] {$q_Y$} (PY);
\end{tikzpicture}
,\]
where $\hat f$ and $f_*$ induced by the map $f:X\to Y$. Note that $\hat f$ and $f_*$ are homotopy equivalences as well. It follows that $secat(q_X)=secat(q_Y)$. Therefore we have proved the following.

\begin{proposition}
	If $X$ and $Y$ are homotopy equivalent spaces, then $AC(X)=AC(Y)$.
\end{proposition}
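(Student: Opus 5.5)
The plan is to show that $AC$ equals the sectional category of $q_X:\xtoi\to\pix$, and that sectional category is a fibre homotopy type invariant which a homotopy equivalence transports. The paper has already proved that $q_X$ and $q_Y$ are fibrations, using Lemma \ref{gfib}. Fix a homotopy equivalence $f:X\to Y$ with homotopy inverse $h:Y\to X$. By functoriality and the remark on homotopies, $f_*:\pix\to\Pi(Y)$ and $h_*$ are mutually inverse homotopy equivalences. The maps $\hat f:\xtoi\to\ytoi$, $\hat f(\gamma)=f\gamma$, and $\hat h$ are also mutually inverse homotopy equivalences: compose a homotopy $H:hf\simeq \mathrm{id}_X$ pointwise along paths. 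These maps commute strictly with the $q$'s.

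The key step is the inequality $AC(Y)\le AC(X)$; symmetry then gives equality. I would not try to prove fibre homotopy equivalence directly. Instead I would use the standard pullback-and-homotopy-lifting argument for sectional category.

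Let $V_0,\dots,V_k$ be an open cover of $\pix$ with local sections $s_j:V_j\to\xtoi$. Set $W_j=h_*^{-1}(V_j)\subseteq\Pi(Y)$; these are open and cover $\Pi(Y)$. On $W_j$ define $\sigma_j=\hat f\circ s_j\circ h_*$. Then $q_Y\sigma_j=f_*q_Xs_jh_*=f_*h_*|_{W_j}$. Let $K:\Pi(Y)\times I\to\Pi(Y)$ be a homotopy from $f_*h_*$ to $\mathrm{id}$, induced by a homotopy $fh\simeq \mathrm{id}_Y$ as in the earlier remark. Restrict $K$ to $W_j\times I$. Since $q_Y$ is a fibration, the homotopy lifting property applied to $\sigma_j$ and $K|_{W_j\times I}$ gives $\widetilde K:W_j\times I\to\ytoi$ with $\widetilde K_0=\sigma_j$ and $q_Y\widetilde K=K$. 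Then $\widetilde K_1$ is an honest local section of $q_Y$ over $W_j$. Hence $AC(Y)\le AC(X)$.

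The step needing care is the homotopy lifting: it requires $q_Y$ to be a Hurewicz fibration, so that lifting works over the arbitrary open subsets $W_j$ and not only over CW-complexes. That is exactly what the earlier proposition together with Lemma \ref{gfib} provides. One should check that the proof of that proposition indeed yields the homotopy lifting property for all spaces, i.e.\ that the $G$-fibration $\txtoi\to\tx\times\tx$ is a Hurewicz $G$-fibration. If only a Serre fibration were available, I would restrict the statement to spaces for which $\Pi(Y)$ is paracompact, or pass through numerable covers.

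The other ingredient, $f_*h_*\simeq\mathrm{id}$ via $\hat H([\gamma],t)=[H(\gamma(\cdot),t)]$, needs continuity of $\hat H$ on $\pix\times I$. I would verify this using the local product structure of $\pix$ and the description of $q$ as a quotient map: locally compute through $\xtoi\times I$, noting that $q\times \mathrm{id}_I$ is still a quotient because $I$ is locally compact.
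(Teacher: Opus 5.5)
Your proposal is correct and takes the same route as the paper: identify $AC$ with $secat(q)$, then transport local sections along the strictly commuting square $q_Y\hat f=f_*q_X$ induced by the homotopy equivalence. The paper only asserts ``it follows that $secat(q_X)=secat(q_Y)$''; your homotopy-section-plus-lifting argument is the standard justification of that step, and it needs only $f_*h_*\simeq \mathrm{id}$ and the Hurewicz lifting property of $q_Y$, not that $\hat f$ is itself a homotopy equivalence.
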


\begin{ej}
	(Asphericity complexity of spheres) Since $S^1$ is aspherical, then $AC(S^1)=0$. For $n\geq 2$, by Remark \ref{acforsc}, $AC(S^n)=TC(S^n)$, and by \cite[Theorem 8]{farber} we have
	\[
	AC(S^n) = \begin{cases} 
		0 & \text{for } n = 1, \\ 
		1 & \text{for } n \text{ odd}, \, n \neq 1, \\ 
		2 & \text{for } n \text{ even}. 
	\end{cases}
	\] 
\end{ej}
	\begin{ej}
		(Asphericity complexity of projective spaces) Let $\R P^n$ be the $n$-dimensional real projective space ($n\geq 2$). By Theorem \ref{acvstcg}, $AC(\R P^n)=TC_{\Z_2}(S^n)$, where $\Z_2$ acts antipodally on $S^n$. By \cite[Lemma 4.1]{GGTX} we have
		\[
		AC(\R P^n) = \begin{cases} 
			1 & \text{for } n \text{ odd},  \\ 
			2 & \text{for } n \text{ even}. 
		\end{cases}
		\] 
	\end{ej}

We prove next that, for CW-complexes $X$ of dimension $2$, $AC(X)$ can take only two values: if $X$ is not aspherical, $AC(X)=2$. Of course this is not longer true for higher dimensions (see Example \ref{prodspheres}). We will use the following well known result of Schwarz \cite{sch} (see also \cite{james}).

\begin{proposition}[Schwarz]\label{schwarz}
If $B$ is a CW-complex, the sectional category of a fibration $q:E\to B$ with fiber $F$ satisfies
$$secat(q)\leq \frac{\dim(B)+1}{conn(F)+2},$$
where $conn(F)$ denotes the connectivity of the fiber $F$.
\end{proposition}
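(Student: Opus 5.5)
This is Schwarz's classical result, and the paper quotes it as a known theorem. I would prove it in the standard way, through fiberwise joins. The statement should be read with $\operatorname{secat}$ normalized, so that a cover by $k+1$ open sets gives $\operatorname{secat}=k$, and with $\operatorname{conn}(F)=c$ meaning that $F$ is $c$-connected. The claim is then that $\operatorname{secat}(q)\le k$ as soon as $(k+1)(c+2)-1\ge \dim B$, which is equivalent to $k\ge \frac{\dim B+1}{c+2}-1$. That is in fact stronger than the displayed inequality.

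\emph{Step 1 (Schwarz's characterization).} For a fibration $q:E\to B$ over a paracompact base (CW-complexes are paracompact), let $q_k:E_k\to B$ be the $(k+1)$-fold fiberwise join of $q$ with itself. Its fiber is the join $F^{*(k+1)}$. The key fact is that $\operatorname{secat}(q)\le k$ if and only if $q_k$ admits a global section.

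For one direction, suppose $U_0,\dots,U_k$ is an open cover with local sections $s_j$. Choose a partition of unity $\{\phi_j\}$ subordinate to it; then $b\mapsto[\phi_0(b)s_0(b),\dots,\phi_k(b)s_k(b)]$ is a section of the join. For the converse, given a section of $q_k$, the sets where the $j$-th join coordinate is positive form an open cover, and on each of them the $j$-th coordinate gives a local section.

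The delicate technical point is that $q_k$ must again be a fibration. I would either cite Schwarz's theorem for this, or replace $q$ by a Hurewicz fibration and use the standard fact that fiberwise joins of Hurewicz fibrations over paracompact spaces are fibrations.

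\emph{Step 2 (connectivity of the join).} By Milnor's join formula, if $F$ is $c$-connected then $F*F$ is $(2c+2)$-connected. Inductively, $F^{*(k+1)}$ is $\bigl((k+1)(c+2)-2\bigr)$-connected.

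\emph{Step 3 (obstruction theory).} Construct a section of $q_k$ skeleton by skeleton over the CW-complex $B$. Extending a section from $B^{n-1}$ to $B^n$ over an $n$-cell amounts to a lifting problem over the disc $D^n$, on which the fibration is trivial up to fiber homotopy equivalence. The obstruction therefore lies in $\pi_{n-1}$ of the fiber, possibly with local coefficients. These groups vanish for $n-1\le (k+1)(c+2)-2$. So a section exists whenever $\dim B\le (k+1)(c+2)-1$.

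Taking $k$ minimal with this property, namely $k=\lceil(\dim B+1)/(c+2)\rceil-1$, gives $\operatorname{secat}(q)\le k\le \frac{\dim B+1}{c+2}$.

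I expect the main obstacle to be Step 1: verifying that the fiberwise join of a fibration is a fibration, and that the partition-of-unity argument works in the Hurewicz setting. I would handle this by citing Schwarz \cite{sch} (or \cite{james}) rather than reproving it.
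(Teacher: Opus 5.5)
Your argument is correct, and it is the standard fiberwise-join-plus-obstruction proof from Schwarz's original paper, which is all the paper relies on here: the proposition is quoted from Schwarz and James without proof. Your normalized reading of $secat$ is also the right one, since that is how the paper turns the bound $5/2$ into $AC(X)\le 2$ in Proposition~\ref{twocomp}.

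One simplification is available. For this upper bound you only need the easy direction of Step 1: a section of $q_k$ yields $k+1$ open sets carrying local sections. So the partition-of-unity direction you flagged as an obstacle is not needed. The only delicate input is that $q_k$ is again a fibration, and for the skeleton-by-skeleton extension in Step 3 even the homotopy lifting property with respect to discs would suffice.
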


\begin{proposition}\label{twocomp}
Let $X$ be a locally finite or countable $2$-complex. Then
	\[
		AC(X) = \begin{cases} 
			0 & \text{if } X \text{ is aspherical},  \\ 
			2 & \text{if } X \text{ is not aspherical}. 
		\end{cases}
		\] 
\end{proposition}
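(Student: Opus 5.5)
The aspherical case is already settled: for a locally finite or countable CW-complex, $AC(X)=0$ if and only if $X$ is aspherical, by the remark following the definition of asphericity complexity (which rests on Corollary \ref{cwdpp}). So assume $X$ is not aspherical. Then $AC(X)\geq 1$, and it remains to prove the upper bound $AC(X)\leq 2$ and to rule out $AC(X)=1$.

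\emph{Upper bound.} I would apply Schwarz's estimate (Proposition \ref{schwarz}) to the fibration $q:\xtoi\to\pix$. The statement that $q$ is a fibration was proved in Section \ref{dppforcovers}, and $AC(X)=secat(q)$. The base $\pix$ is a CW-complex because $X$ is locally finite or countable. Its dimension is $4$: $\pix=(\tx\times\tx)/G$ and $\tx$ is $2$-dimensional. The fiber of $q$ over $[\gamma]$ is the space of paths in the homotopy class of $\gamma$. Via the identification of $\Pi(\tx)$ with $\tx\times\tx$, this is the fiber of $\txtoi\to\tx\times\tx$, namely $\Omega\tx$. Since $\tx$ is simply connected, $\Omega\tx$ is path connected, so $conn(F)\geq 0$. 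Schwarz's bound then gives $secat(q)\leq (4+1)/2$, hence $AC(X)\leq 2$ because $AC(X)$ is an integer. The key point is that the fiber of $q$ is $\Omega\tx$ rather than $\Omega X$.

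\emph{Excluding $AC(X)=1$.} This is the main obstacle. By Theorem \ref{acvstcg}, $AC(X)=TC_G(\tx)$. I would first compare this with the nonequivariant invariant: by the same pullback construction as in Proposition \ref{dppcov}, $G$-invariant local sections over $\tx\times\tx$ are in particular local sections, so $TC(\tx)\leq TC_G(\tx)$. Since $X$ is a non-aspherical $2$-complex, $H_2(\tx)=\pi_2(X)\neq 0$. Also $\tx$ is a simply connected $2$-complex, hence homotopy equivalent to a wedge of $2$-spheres, with at least one sphere.

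The plan is then to use a zero-divisor cup-length argument. Take a class $u\in H^2(\tx;\Z)$ that is dual to a spherical class. Since $u^2=0$ for dimensional reasons, the zero-divisor $\bar u=u\otimes 1-1\otimes u$ satisfies $\bar u^2=-2\,u\otimes u\neq 0$ in $H^4(\tx\times\tx;\mathbb Q)$. This gives $TC(\tx)\geq 2$, exactly as for even spheres, and combined with $TC(\tx)\leq AC(X)$ it yields $AC(X)\geq 2$.

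There is a technical subtlety when $\tx$ is an infinite wedge, since one must work with cohomology that is not of finite type. I would handle it by restricting to a single wedge summand $S^2$. The map $\Pi(S^2)=S^2\times S^2\to\tx\times\tx$ pulls local sections back to sections of $(S^2)^{\mathrm I}\to S^2\times S^2$ by composing with a retraction $\tx\to S^2$ up to homotopy. A wedge summand of a wedge of spheres is a homotopy retract, so this is valid and gives $TC(\tx)\geq TC(S^2)=2$.
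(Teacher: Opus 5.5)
Your proof is correct and follows the paper's argument. The upper bound comes from Schwarz's estimate applied to $q$, with fiber $\Omega\tx$ over the $4$-dimensional base $\pix$, and the lower bound comes from $AC(X)=TC_G(\tx)\geq TC(\tx)\geq TC(S^2)=2$ using $\tx\simeq\bigvee S^2$. Your homotopy-retract justification of $TC(\tx)\geq TC(S^2)$ supplies a step the paper only asserts.
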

\begin{proof}
If $X$ is not aspherical, take the universal cover $\tx$. Since $\tx$ is a simply connected $2$-complex, then $\tx\simeq \bigvee_{j\in J}S^2$ with $J\neq\emptyset$ because $\tx$ is not contractible. Then 
$$AC(X)=TC_G(\tx)\geq TC(\tx)=TC(\bigvee_{j\in J}S^2)\geq TC(S^2)=2,$$
where $G=Deck(\tx/X)$.

On the other hand, $AC(X)=secat(q:\xtoi\to\pix)$. Note that the fiber $F$ of the fibration $q$ is the space of loops in $X$ (at some base point $x_0$) that are nullhomotopic. By the lifting property of the covering, $F=\Omega \tx$ (the loop space of $\tx$). Since $\tx$ is simply connected, $\pi_0(F)=\pi_1(\tx)=0$. Therefore $conn(F)\geq 0$ and, by Proposition \ref{schwarz}, $secat(q)\leq 5/2$ (since $\pix$ is a $4$-dimensional CW-complex). It follows that $AC(X)=2$.
\end{proof}

\begin{remark}
If $X$ is a locally finite or countable CW-complex of dimension $n$, then $AC(X)\leq n$. This follows from Proposition \ref{schwarz}, using the same argument as in the proof of Proposition \ref{twocomp}.
\end{remark}

We finish the paper studying the asphericity complexity of a product of spaces. 

\begin{proposition}\label{acproduct}
	If $X$ and $Y$ are locally finite or countable CW-complexes, then $AC(X\times Y)\leq AC(X)+AC(Y)$.
	\end{proposition}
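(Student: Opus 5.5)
We prove the inequality by identifying the fibration for $X\times Y$ with a product of fibrations and applying the standard product inequality for sectional category. By the note following Theorem \ref{acvstcg}, $AC(Z)=secat(q_Z)$ for the fibration $q_Z:Z^{\mathrm I}\to\Pi(Z)$. It therefore suffices to show that $q_{X\times Y}$ is, up to homeomorphism, the product fibration $q_X\times q_Y:\xtoi\times\ytoi\to\pix\times\Pi(Y)$, and then to use $secat(f\times f')\leq secat(f)+secat(f')$.

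\textbf{Step 1: product decomposition.} The natural map $\xytoi\to\xtoi\times\ytoi$, $\gamma\mapsto(pr_1\gamma,pr_2\gamma)$, is a homeomorphism in the compact-open topology, since $I$ is compact Hausdorff. On classes we have the map $\Pi(X\times Y)\to\pix\times\Pi(Y)$, $[\gamma]\mapsto([pr_1\gamma],[pr_2\gamma])$. It is a bijection because a homotopy rel endpoints in a product is exactly a pair of such homotopies. It is continuous by functoriality. It is open because for nice $U,V\subset X$ and $U',V'\subset Y$, the products $U\times U'$ and $V\times V'$ are nice, and a basic set $S(U\times U',V\times V',[\gamma])$ maps onto $S(U,V,[\gamma_1])\times S(U',V',[\gamma_2])$. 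For surjectivity onto this product one concatenates $\delta_1*\gamma_1*\delta_2$ with constant paths in the other factor and reparametrizes. These identifications commute with the quotient maps, so $q_{X\times Y}\cong q_X\times q_Y$.

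\textbf{Step 2: the product inequality.} Set $m=AC(X)$ and $n=AC(Y)$. Choose open covers $U_0,\dots,U_m$ of $\pix$ and $V_0,\dots,V_n$ of $\Pi(Y)$ with local sections $s_i$ and $t_j$. The sets $U_i\times V_j$ carry sections $s_i\times t_j$, which naively gives $(m+1)(n+1)$ sets. To reduce this to $m+n+1$ sets, use the classical argument for $secat$ of products (as in \cite{CLOT}, or Farber's proof of $TC(X\times Y)\leq TC(X)+TC(Y)$): for $0\leq k\leq m+n$, let $W_k$ be the union of the sets $U_i\times V_j$ with $i+j=k$. For this union to be disjoint, we need the covers to be refined first.

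\textbf{Step 3: the main obstacle, refining to disjoint covers.} This is the step that requires the hypotheses. Both bases $\pix$ and $\Pi(Y)$ must be normal, or paracompact, so that the open covers can be replaced by covers whose pieces of the same index level are disjoint (via Farber's partition-of-unity construction with the sets $\{\,f_i>\ldots\}$). This is where we use that $X$ and $Y$ are locally finite or countable CW-complexes: by the earlier proposition, $\pix$ and $\Pi(Y)$ are then CW-complexes, hence paracompact. We also need their product to be a CW-complex and to carry the product topology, so that the partition-of-unity argument applies on $\pix\times\Pi(Y)$. This holds because both are locally finite, or both countable. (In the mixed case, one locally finite and one countable, the product of a locally compact CW-complex with any CW-complex is a CW-complex, so the argument still goes through.) With disjoint refinements, the sections on the pieces of $W_k$ glue, which yields $secat(q_X\times q_Y)\leq m+n$. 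Together with Step 1, this gives $AC(X\times Y)\leq AC(X)+AC(Y)$.
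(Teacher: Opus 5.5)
Your proposal is correct and follows essentially the same route as the paper: you identify $\Pi(X\times Y)$ with $\Pi(X)\times\Pi(Y)$, which the paper takes for granted and you verify in Step 1, and then run Farber's partition-of-unity argument using paracompactness of the CW-complexes $\Pi(X)$ and $\Pi(Y)$. Two small corrections: the sets $W_k$ should be unions of Farber's sets $W(S,T)$ with $|S|+|T|$ fixed, not unions of the $U_i\times V_j$ with $i+j=k$; and you do not need $\Pi(X)\times\Pi(Y)$ to be a CW-complex, because the functions $f_i(a)g_j(b)$ are already continuous in the product topology.
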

\begin{proof}
	The proof is almost identical to the one given in \cite[Theorem 11]{farber} for the topological complexity of the product. Suppose $AC(X)=n$ and $AC(Y)=m$. Let $U_0,\ldots,U_n\subseteq \pix$ and $V_0,\ldots,V_m\subseteq \Pi(Y)$ be open covers with $s_i:U_i\to\xtoi$ and $s'_j:V_j\to \ytoi$ local sections.
	
	Since $\pix$ and $\Pi(Y)$ are CW-complexes, then they are paracompact spaces and we can choose partitions of the unity $f_i:\pix\to\R$ and $g_j:\Pi(Y)\to\R$ subordinate to the coverings $\{U_i\}$ and $\{V_j\}$. 
	
	For $S\subset \{0,\ldots,n\}$ and $T\subset\{0,\ldots,m\}$, we define the open subsets $$W(S,T)\subset \Pi(X\times Y)=\pix\times\Pi(Y)$$
	exactly as in the proof of  \cite[Theorem 11]{farber}. 
	
	Now, for $k=2,\ldots,n+m+2$, we take the union $W_k$ of all $W(S,T)$ with $|S|+|T|=k$. Then $W_2,\ldots,W_{n+m+2}$ is an open cover of $\Pi(X\times Y)$, and each $W_k$ has a section $$\sigma_k:W_k\to \xytoi=\xtoi\times\ytoi,$$ described in terms of $s_i\times s'_j:U_i\times V_j\to\xtoi\times \ytoi$.
	\end{proof}
	
If $X$ is a contractible space, then for any space $Y$, $TC(X\times Y)=TC(Y)$, since $X\times Y\simeq Y$. By Proposition	\ref{acproduct}, if $X$ is an aspherical CW-complex $AC(X\times Y)\leq AC(Y)$. We show that, in fact, they are equal.
\begin{proposition}
	If $X$ is a locally finite or countable aspherical CW-complex, then $AC(X\times Y)=AC(Y)$ for any locally finite or countable CW-complex $Y$.
	\end{proposition}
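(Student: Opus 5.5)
The upper bound $AC(X\times Y)\leq AC(Y)$ follows from Proposition \ref{acproduct}, since $AC(X)=0$ by Theorem \ref{asphericalcw}. So the plan is to prove the reverse inequality $AC(Y)\leq AC(X\times Y)$ by exhibiting $Y$ as a retract of $X\times Y$ in a way compatible with the fibrations $q$.

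Fix a base point $x_0\in X$, and consider the inclusion $j:Y\to X\times Y$, $j(y)=(x_0,y)$, and the projection $pr:X\times Y\to Y$. By functoriality we get $j_*:\Pi(Y)\to\Pi(X\times Y)=\pix\times\Pi(Y)$ and $pr_*:\Pi(X\times Y)\to\Pi(Y)$, with $pr_*\circ j_*=\mathrm{id}$. Similarly there are maps $\hat j:\ytoi\to\xytoi$ and $\widehat{pr}:\xytoi\to\ytoi$ given by composition, with $\widehat{pr}\circ\hat j=\mathrm{id}$. These commute with the quotient maps: $q_{X\times Y}\hat j=j_*q_Y$ and $q_Y\widehat{pr}=pr_*q_{X\times Y}$.

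Now suppose $W_0,\ldots,W_k$ is an open cover of $\Pi(X\times Y)$ with local sections $\sigma_i:W_i\to\xytoi$ of $q_{X\times Y}$, where $k=AC(X\times Y)$. Define $V_i=j_*^{-1}(W_i)\subseteq\Pi(Y)$, which is open, and $s_i=\widehat{pr}\circ\sigma_i\circ j_*|_{V_i}:V_i\to\ytoi$. Then
$$q_Y s_i=q_Y\widehat{pr}\sigma_i j_*=pr_*q_{X\times Y}\sigma_i j_*=pr_* j_*=\mathrm{id}_{V_i}.$$
Hence the $V_i$ cover $\Pi(Y)$ with sections, and $AC(Y)\leq k$.

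This argument is essentially the standard proof that sectional category does not increase under retracts. Note that it does not use asphericity of $X$; that hypothesis enters only in the upper bound. The only step needing care is checking that the squares above commute, and this is immediate from the definitions of $j_*$ and $pr_*$ on homotopy classes. So I expect no real obstacle beyond bookkeeping.
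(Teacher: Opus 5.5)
Your argument is correct. The upper bound is obtained exactly as in the paper, but your lower bound takes a genuinely different route.

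\textbf{The paper's route.} It passes through Theorem \ref{acvstcg} and writes $AC(X\times Y)=TC_G(\tx\times\ty)$ with $G=H\times K$. It then applies the Colman--Grant inequality $TC_G\geq TC_K$ for the subgroup $1\times K\subset G$. Finally, it uses contractibility of $\tx$ to show that $\tx\times\ty\to\ty$ is a $K$-homotopy equivalence, so that $TC_K(\tx\times\ty)=TC_K(\ty)=AC(Y)$.

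\textbf{Your route.} You observe that the retraction $pr\circ j=\mathrm{id}_Y$ makes $q_Y$ a retract of $q_{X\times Y}$: on the bases via $j_*$ and $pr_*$, and on the total spaces via $\widehat{pr}$. You then pull local sections back along $j_*$. Only the square $q_Y\widehat{pr}=pr_*q_{X\times Y}$ is actually used; the other one is superfluous.

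\textbf{What each buys.} Your argument needs nothing beyond functoriality, continuity of $j_*$, and continuity of post-composition in the compact-open topology. It avoids the identification $AC=TC_G$ and the external result from \cite{CG}. It is also more general: it shows $AC(Y)\leq AC(X\times Y)$ for any good $X$, aspherical or not, without CW hypotheses, and by symmetry $\max\{AC(X),AC(Y)\}\leq AC(X\times Y)$. So asphericity of $X$ is needed only for the upper bound. The paper's route instead illustrates how Theorem \ref{acvstcg} lets one import properties of equivariant topological complexity. In doing so it uses the asphericity hypothesis, through contractibility of $\tx$, in both inequalities.
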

	\begin{proof}
		By Proposition \ref{acproduct}, we have  $AC(X\times Y)\leq AC(Y)$. We prove that  $AC(X\times Y)\geq AC(Y)$. Let $\tx$ and $\ty$ be the universal covers of $X$ and $Y$. Note that, since $X$ is aspherical, then $\tx$ is contractible. Let $H=Deck(\tx/X)$ and $K=Deck(\ty/Y)$, then $G=H\times K=Deck(\tx\times\ty/X\times Y)$. Since $\tx$ is contractible, the map $f:\tx\times\ty\to \{*\}\times \ty$ defined by $f(x,y)=(*,y)$ is a homotopy equivalence and is $K$-invariant. Here we view $K$ as the subgroup $1\times K\subset G$. Then $f$ is a $K$-homotopy equivalence and, therefore, $TC_K(\tx\times \ty)=TC_K(\ty)=AC(Y)$. By \cite[Corollary 5.4]{CG}, $TC_G(\tx\times \ty)\geq TC_K(\tx\times \ty)$. Then, $AC(X\times Y)=TC_G(\tx\times \ty)\geq TC_K(\tx\times \ty)=AC(Y)$.
		\end{proof}

Since $S^1$ is aspherical, $AC(S^1\times Y)=AC(Y)$ for any $Y$. Using this, and the computation of the topological complexity of any product of spheres proved in \cite{BGRT}, we deduce the following formula.

\begin{ej}\label{prodspheres}
	(Asphericity complexity of products of spheres) Let $X=S^{m_1}\times S^{m_2}\times\ldots\times S^{m_n}$. Since $AC(S^1\times Y)=AC(Y)$ for any $Y$, we remove from $X$ all occurrences of $S^1$, and consider  the product of the remaining spheres $X'=S^{r_1}\times\ldots\times S^{r_s}$ (with $s\leq n$). Since $r_j\geq 2$ for all $j$, $X'$ is simply connected and then, $AC(X')=TC(X')$. By \cite[Corollary 3.12]{BGRT} we have
	$$AC(S^{m_1}\times S^{m_2}\times\ldots\times S^{m_n})=s+t,$$
	where $s$ is the number of spheres of dimension greater than $1$ and $t$ is the number of spheres of even dimension.
			\end{ej}

\end{document}